\definecolor{vertFonce}{rgb}{0,0.5,0}
\definecolor{numLignes}{rgb}{0.17,0.57,0.7}	
\definecolor{gris}{rgb}{0.5,0.5,0.5}
\definecolor{grisFonce}{rgb}{0.2,0.2,0.2}
\definecolor{orange}{rgb}{1,0.65,0.31}		
\definecolor{orangeFonce}{rgb}{1,0.4,0}
\definecolor{bleuFonce}{rgb}{0,0,0.4}
\definecolor{rougeFonce}{rgb}{0.3,0,0}
\definecolor{rougeWord}{rgb}{0.5,0,0}
\definecolor{vertClair}{rgb}{0.8,1,0.8}
\definecolor{rougeClair}{rgb}{1,0.5,0.5}
\newtheorem{lem}{Lemma}[section]
\newtheorem{theorem}{Theorem}
\newtheorem{cor}{Corollary}[section]
\newtheorem{prop}{Proposition}[section]
\newtheorem{definition}{Definition}[section]
\newtheorem{remark}{Remark}[section]
\newcommand{\step}[1]	{\paragraph{\itshape\bfseries Step #1.}}
\newcommand		{\ssi}		{\Leftrightarrow}
\newcommand		{\N}		{\mathbb N}			
\newcommand		{\R}		{\mathbb R}			
\newcommand		{\KK}		{\mathcal K}		
\newcommand     {\M}		{\mathcal{M}}		
\newcommand		{\PP}		{\mathcal{P}}		
\renewcommand	{\L}		{\mathcal L}		
\newcommand		{\ball}			{B}
\renewcommand		{\d}		{\mathrm{d}}	
\newcommand			{\dd}		{\,\d}			
\newcommand			{\dt}		{\frac{\d}{\d t}}	
\DeclareMathOperator{\F}		{\mathcal F}	
\DeclareMathOperator{\divg}		{div}			
\DeclareMathOperator{\G}		{\Gamma}		
\DeclareMathOperator{\E}		{\mathbb{E}}	
\newcommand		{\indic}		{\mathds{1}}	
\newcommand		{\lt}			{\left}			%
\newcommand		{\rt}			{\right}		%
\newcommand		{\lal}			{\langle}		%
\newcommand		{\ral}			{\rangle}		%
\newcommand		{\weight}[1]	{\lt\lal #1\rt\ral}	
\newcommand		{\intd}			{\int_{\R^d}}
\newcommand		{\iintd}		{\iint_{\R^{2d}}}
\newcommand		{\init}			{\mathrm{in}}
\newcommand		{\loc}			{\mathrm{loc}}
\newcommand		{\eps}			{\varepsilon}
\newcommand		{\e}			{\varepsilon}	
\newcommand		{\Eps}			{\mathcal{E}}
\newcommand		{\C}			{\mathcal{C}}
\newcommand		{\CS}			{\C^\mathrm{S}}
\newcommand		{\CHLS}			{\C^\mathrm{HLS}}
\newcommand		{\CGNS}			{\C^\mathrm{GNS}}
\newcommand		{\ak}			{\beta}
\newcommand		{\al}			{\alpha}
\DeclareMathOperator{\I}		{\mathit{I}}
\newcommand		{\Dp}[1]		{{\mathfrak{D}}_{#1}}
\newcommand		{\subsetArrow}	{\mathrel{\ooalign{$\subset$\cr%
\hidewidth\raise-.087ex\hbox{$_\shortrightarrow\mkern-1.5mu$}\cr}}}
\newcommand		{\subsetarrow}	{\mathrel{\ooalign{$\subset$\cr%
\hidewidth\raise-1.45ex\hbox{$\vec{}\mkern6mu$}\cr}}}
\title[Fractional Keller-Segel Equation]{Fractional Keller-Segel Equation:  Global Well-posedness and Finite Time Blow-up}
\author{Laurent Lafleche$^{1,2}$ \and Samir Salem$^1$}
\thanks{$^1$CEREMADE, UMR CNRS 7534, Université Paris-Dauphine, PSL Research University, Place du Maréchal de Lattre de Tassigny, 75775 Paris cedex 16 France, {\tt lafleche@ceremade.dauphine.fr} \and {\tt salem@ceremade.dauphine.fr}}
\thanks{$^2$CMLS, \'Ecole polytechnique, CNRS, Universit\'e Paris-Saclay, 91128 Palaiseau cedex, France}
\date{\today}
\keywords{fractional diffusion with drift, fractional Laplacian, aggregation diffusion, mean field equation.}
\subjclass[2010]{35R11, 35A01, 35A02, 35B44, 35B40.}
\begin{document}

\begin{abstract}
	This article studies the aggregation diffusion equation
	\[
		\partial_t\rho = \Delta^\frac{\alpha}{2} \rho + \lambda\,\mathrm{div}((K*\rho)\rho),
	\]
	where $\Delta^\frac{\alpha}{2}$ denotes the fractional Laplacian and $K = \frac{x}{|x|^\beta}$ is an attractive kernel. This equation is a generalization of the classical Keller-Segel equation, which arises in the modelling of the motion of cells. In the \textit{diffusion dominated} case $\beta < \alpha$, we prove global well-posedness for an $L^1_k$ initial condition, and in the \textit{fair competition} case $\beta = \alpha$ for an $L^1_k\cap L\ln L$ initial condition with small mass. In the \textit{aggregation dominated} case $\beta > \alpha$, we prove global or local well-posedness for an $L^p$ initial condition, depending on some smallness condition on the $L^p$ norm of the initial condition. We also prove that finite time blow-up of even solutions occurs under some initial mass concentration criteria.
\end{abstract}
 
 \maketitle
 \bigskip

\section{Introduction}\label{sec:intro}

	The models arising in the context of the chemotaxis of cells have been thoroughly studied in recent years. Among those, the (parabolic-elliptic) Keller-Segel equation models the competition between the aggregation and diffusion of cells (see \cite{blanchet_two-dimensional_2006} and references therein for a proper biological and mathematical introduction on the topic). In this paper we consider a variant of this classical model where the diffusion is modelled with a fractional Laplacian. Such a choice is biologically motivated (see for instance \cite{escudero_fractional_2006, bournaveas_one-dimensional_2010} and references therein). From a mathematical point of view, it is then interesting to study how such a diffusion competes with an aggregation field which singularity is up to the Newtonian one.

	More precisely for some $(\al,\ak)\in \R_+^2$, we consider the fractional Keller-Segel equation
	\begin{equation}\label{eq:FKS}\tag{FKS}
		\partial_t\rho  = \Delta^\frac{\al}{2} \rho + \lambda\divg((K*\rho)\rho),
	\end{equation}
	where $\lambda>0$ is a parameter encoding the chemosensitivity, or the intensity of the aggregation. The interaction kernel is given by
	\begin{equation*}
		K(x) := \frac{x}{|x|^\ak},
	\end{equation*}
	and $\I := \Delta^\frac{\al}{2}$ denotes the fractional Laplacian defined by
	\begin{equation}\label{def:lapfrac}
		\I(u) = \Delta^\frac{\al}{2} u := c_{d,\al}\intd \frac{u(y)-u(x)}{|x-y|^{d+\al}} \dd y.
	\end{equation}
	The constant $c_{d,\al}$ can be written $c_{d,\al} = -(2\pi)^\al \frac{\omega_{-\al}}{\omega_{d+\al}} > 0$ where $\omega_d = \frac{2\pi^{d/2}}{\G(d/2)}$ is the size of the unit sphere in $\R^d$ when $d\in\N^*$.

	Particular cases of equation~\eqref{eq:FKS} have been studied by numerous authors recently. The classical case corresponds to the choice $\al=\ak=d=2$ and has been thoroughly studied in the past years. In \cite{blanchet_two-dimensional_2006}, the authors show the global well-posedness when the initial mass $M_0$ is smaller than the critical one $M_c=\frac{4}{\lambda}$. Above this mass, a finite time blow-up is shown to appear. This blowup phenomenon was already proved in \cite{jager_explosions_1992} (see also \cite{nagai_blow-up_1995}). In \cite{carrillo_uniqueness_2014} is also established the well posedness for an $L^\infty$ initial condition. This assumption is sufficient to enjoy the Log-Lipschitz regularity of the nonlinear drift $K*\rho$, as in this case $K$ is the Newtonian kernel (see for instance \cite{loeper_uniqueness_2006}). It is possible to relax this assumption to $L\,\ln L$ initial data \cite{egana_fernandez_uniqueness_2016} or even measure initial data \cite{bedrossian_existence_2014}. Large time behaviour is also studied in \cite{blanchet_two-dimensional_2006, campos_serrano_asymptotic_2014, egana_fernandez_uniqueness_2016}. In higher dimension, the variant case $\al=2$, $\ak=d=3$ is studied in \cite{corrias_global_2004}, where a finite time blow-up is obtained under a concentration of initial mass condition.
	
	\begin{figure}[h]\centering
		\mbox{{\includegraphics[scale=0.75]{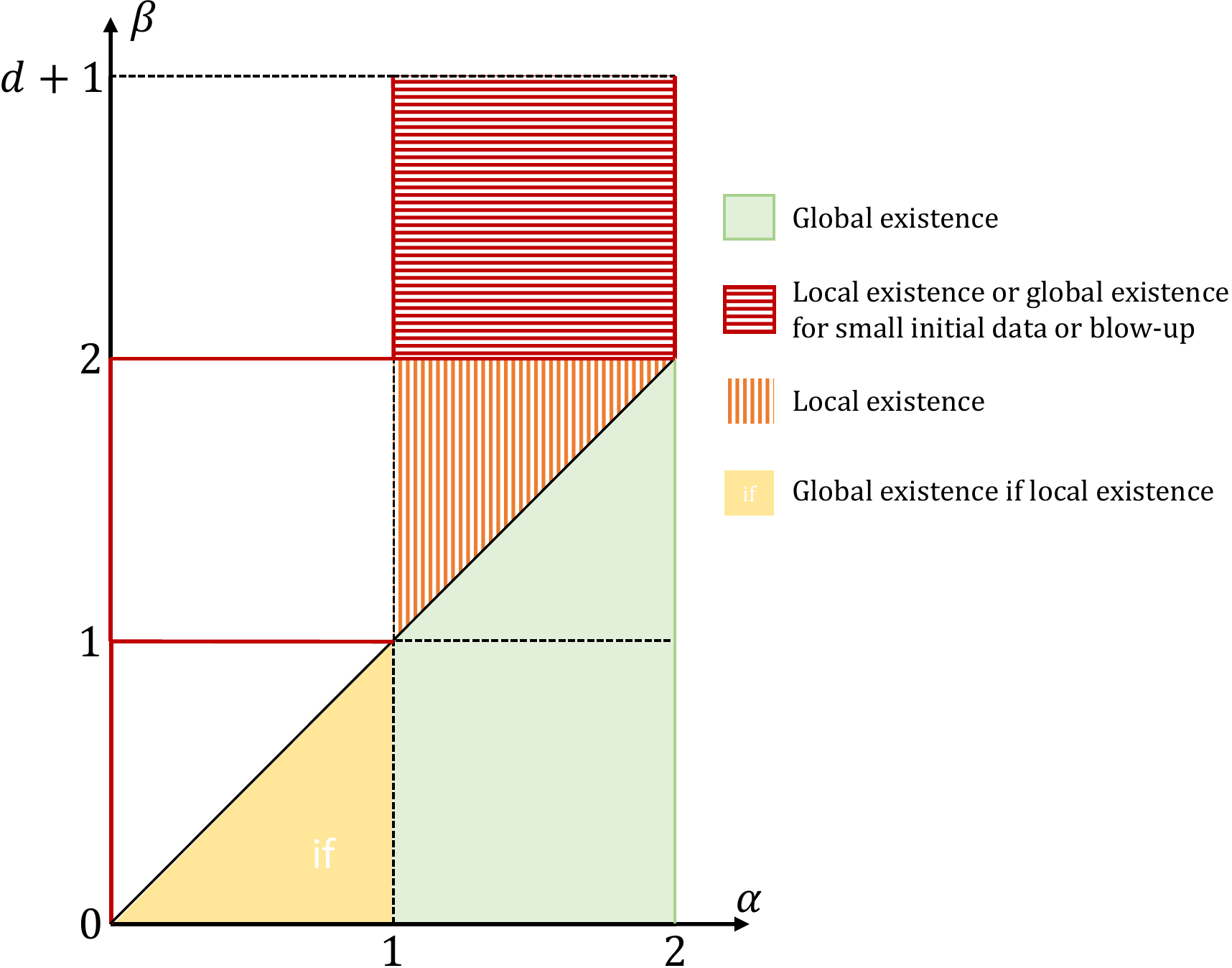}}}
		\caption{Existing results for the \eqref{eq:FKS}~equation.}
		\label{fig1}
	\end{figure}
	
	The literature on the fractional case $\al<2$, is also large and growing and previously known results are summarized in Figure~\ref{fig1}. In a significant part of it, the kernel $K$ is the Newtonian one ($\ak=d$). In the one dimensional case, \cite{bournaveas_one-dimensional_2010} provides a well posedness result for an $L^p$ initial condition with $p>\frac{1}{\al}$ when $\al\in (0,1)$ and $p>1$ when $\ak\in (0,1)$, as well as a finite time blow-up of even solutions under some concentration of initial mass criteria. The critical case $\al = 1$ was then treated in \cite{burczak_critical_2016}. In the case $d\geq 2$, \cite{biler_morrey_2016} also provides some concentration of initial mass criteria leading to a blow-up of solutions when $\al \in (0,2)$. See also the recent paper \cite{biler_large_2018} for sharper results. Still in the Newtonian case, \cite{li_exploding_2010} provides similar results in the range $\al \in (0,2)$. In the limiting case $\al=0$, see \cite{bertozzi_blow-up_2009} for $\ak\in [0,1)$, \cite{bertozzi_finite-time_2007} for $\ak=1$, and \cite{li_global_2010} for $\ak\in(1,2)$. For $\al=2$ and $\ak\in (0,2)$, see \cite{karch_blow-up_2011} and \cite{godinho_propagation_2015}, and for $\ak=1$ and $\al\in(0,1)$, see \cite{li_finite-time_2009, li_refined_2009, biler_blowup_2009}. For a wider class of parameters, see also \cite{salem_propagation_2017} of the second author and \cite{biler_global_1999, biler_blowup_2009}.

\section{Main Results}\label{sec:results}

	We summarize our results in the following Figure~\ref{fig2}.
	\begin{figure}[h]\centering
		\mbox{{\includegraphics[scale=0.75]{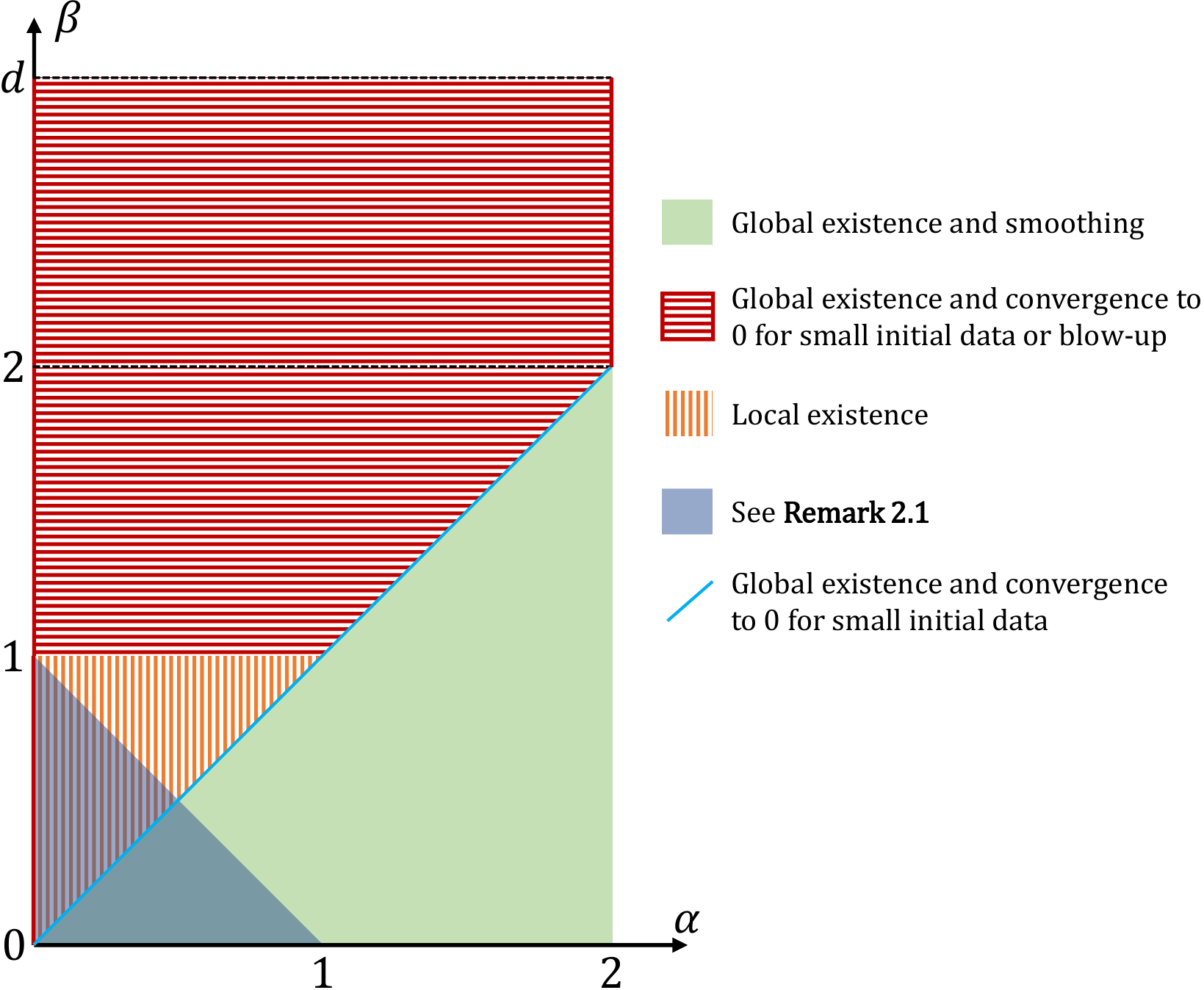}}}
		\caption{Range of application of Theorems~\ref{thm:welpos} and \ref{thm:BU}.  We emphasize that for $d>2$ the results extend to the segment $(\al,\ak)\in \{2\}\times(0,d)$.}
		\label{fig2}
	\end{figure}

	We will work on weighted spaces defined by
	\begin{align*}
		\M_k &:= \{\rho\in\M,\weight{x}^k\rho\in\M\}
		\\
		L^p_k &:= \{\rho\in L^p,\weight{x}^k\rho\in L^p\},
	\end{align*}
	where $\weight{x} = \sqrt{1+|x|^2}$, $L^p = L^p(\R^d)$ and $\M = \M(\R^d)$ denote the space of bounded measures. We also define the space of functions with finite entropy by
	\begin{eqnarray}
		L\,\ln L := \{\rho\in L^1, \rho\ln(\rho) \in L^1\}.
	\end{eqnarray}
	For $s\in (0,1)$, we will denote by $\CS_{d,s}$ the best Sobolev's constant such that for any $f\in H^s$ 
	\begin{equation*}
		\CS_{d,s}\|f\|^2_{L^{\frac{2d}{d-2s}}}\leq |f|^2_{H^s},
	\end{equation*}
	and for $\ak\in (0,d)$ and $p,q>1$ satisfying $2-\frac{\ak}{d}=\frac{1}{p}+\frac{1}{q}$, we will denote by $\CHLS_{d,\ak,p}$ the best Hardy-Littlewood-Sobolev's constant such that for any $f\in L^p$, $g\in L^q$,
	\begin{equation}\label{eq:HLS}
		\lt|\iintd |x-y|^{-\ak}f(x)g(y)\dd x\dd y\rt| \leq \CHLS_{d,\ak,p}\|f\|_{L^p}\|g\|_{L^q}.
	\end{equation}
	Finally for $s\in[0,d)$ and $r = \frac{2d}{d-s}$, we denote $\CGNS_{d,s}$ the best Gagliardo-Nirenberg-Sobolev's constant such that it holds
	\begin{equation*}
		\CGNS_{d,s} \|f\|_{L^r}^2 \leq \|f\|_{L^2} |f|_{H^s}.
	\end{equation*}
	
	For a given given couple $(\ak,\al)$ we define the following exponents for the $L^p$ spaces which will characterize the integrability of the density
	\begin{align}\label{def:p_ak_al}
		p_{\ak,\al} &:= \frac{d}{d+\al-\ak}
		\\\label{def:p_ak}
		p_\ak &:= p_{\ak,0} = \frac{d}{d-\ak}.
	\end{align}
	
	Taking $K = \frac{x}{|x|^\ak}$ let appear two main difficulties. The first one is the singularity at $x=0$ and the second is the behaviour when $x\to\infty$. We will therefore write
	\begin{equation*}
		K = K_0 + K_c = \chi K + (1-\chi)K,
	\end{equation*}
	where $\chi \in C^\infty_c$ verifies $\mathds{1}_{B_1} \leq \chi \leq \mathds{1}_{B_2}$. Several parts of our analysis could be easily generalized to more general kernels with similar behaviour. 

	\begin{definition}\label{def:sol}
		For any $T>0$, we say that $\rho$ is a weak solution to the \eqref{eq:FKS} equation on $(0,T)$ with initial condition $\rho^\init \in \mathcal{M}$ if it satisfies
		\begin{align*}
			\rho &\in C^0\lt([0,T), \M_{(1-\ak)_+}\rt) &&\text{if } \ak\in(0,2]
			\\
			\rho &\in C^0\lt([0,T), \M\rt)\cap L^1_\loc\lt((0,T),L^{p_{\ak,2}}\rt) &&\text{if } \ak\in(2,d+2),
		\end{align*}
		and for any $\varphi\in C^2_c$ 
		\begin{align}\label{eq:varFKS}
			\intd \lt(\rho(t)-\rho^\init\rt)\varphi = \int_0^t &\intd \rho(s)  \lt(I(\varphi) - K_c*(\rho(s)\cdot\nabla\varphi)\rt)
			\\\nonumber
			&+ \iintd K_0(x-y)(\nabla\varphi(x)-\nabla\varphi(y))\rho(s,\d x)\rho(s,\d y)\dd s.
		\end{align}
		We say that this solution is global if we can take $T=+\infty$.
	\end{definition}
	The definition makes sense since it is easy to notice that 
	\begin{align*}
		K_c*(\rho\nabla\varphi) &\in C^0 \cap L^\infty(\weight{x}^{\ak-1})
		\\
		K_0(x-y)(\nabla\varphi(x)-\nabla\varphi(y)) &\in C^0 \cap L^\infty(\R^{2d}) &&\text{if }\ak\in(0,2).
	\end{align*}
	Moreover, if $\ak\in(2,d+2)$, the last term in Definition~\ref{def:sol} is bounded thanks to Hardy-Littlewood-Sobolev inequality. Remark that at least formally, this equation conserves the total mass which we will denote by
	\begin{equation*}
		M_0 := \intd \rho^\init.
	\end{equation*}
	
	First we obtain a global or local well-posedness result, depending on the regime, given in the
	\begin{theorem}\label{thm:welpos}
		Let $(\al,\ak)\in [0,2)\times [0,d)$ be such that $\ak + \al > 1$ and $k\in [(1-\ak)_+,\al)$.

		\noindent$\bullet$ When $\ak<\al$ and $\rho^\init\in L^1_k$, there exists a unique and global weak solution to the \eqref{eq:FKS} equation. 
		
		\noindent$\bullet$ When $\ak=\al$, if $\rho^\init\in L^1_k\cap L\ln L$ satisfies 
		\begin{equation}\label{eq:condSM}
			\lambda M_0 < C_{\ak,d} = \frac{4(2\pi)^{\ak}}{(d-\ak)}\lt(\frac{\omega_{2d}}{\omega_d}\rt) \frac{\omega_{d-\ak}}{\omega_{2d-\ak}} \max\lt(\frac{\omega_{d-\ak}}{\omega_{d+\ak}},\frac{\omega_{d-\ak/2}^2}{\omega_{d+\ak/2}^2}\rt),
		\end{equation}
		then there exists a unique and global weak solution to the \eqref{eq:FKS} equation.
		
		\noindent$\bullet$ When $\ak>\al$ and $\rho^\init\in L^1_k\cap L^p$ with $p\in (p_{\ak,\al}, p_\ak)$, there exists a time $T>0$ such that there is a unique solution to the \eqref{eq:FKS} equation on $(0,T)$. Moreover there is a constant $C_{\lambda,p}(M_0)$ such that if 
		\begin{equation}\label{eq:condEta}
			\|\rho^\init\|_{L^p}\leq C_{\lambda,p}(M_0),
		\end{equation}
		then the solution is global.
	\end{theorem}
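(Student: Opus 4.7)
The plan is to construct approximate solutions by mollifying the singular part $K_0$ of the kernel, solve the regularized PDE via a Banach fixed point on the Duhamel formula
\[
\rho = e^{t\I}\rho^\init - \lambda \int_0^t e^{(t-s)\I}\divg\bigl((K^\eps * \rho)\rho\bigr)(s)\dd s,
\]
and pass to the limit using $\eps$-uniform a priori estimates. Weighted moments $\intd \weight{x}^k \rho$, for $k\in [(1-\ak)_+,\al)$, are propagated via $\weight{x}^k$ as a test function, using $|\I(\weight{x}^k)| \lesssim \weight{x}^{k-\al}$ (from $k<\al$) and $|K(x)|\lesssim \weight{x}^{1-\ak}$ at infinity, while the drift is controlled by $M_0$ and the moment itself.

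\textbf{Case $\ak<\al$.} After multiplying the equation by $p\rho^{p-1}$, integrating by parts, and using the fractional Stroock--Varopoulos inequality $-\intd \rho^{p-1}\I\rho \geq c\,|\rho^{p/2}|^2_{H^{\al/2}}$, the drift contribution is absorbed into the dissipation via HLS~\eqref{eq:HLS} and the Sobolev inequality with constant $\CS_{d,\al/2}$. The subcriticality $\ak<\al$ is precisely what makes this absorption possible for every $p\in[1,p_\ak)$, and combined with moment propagation this yields a global a priori bound.

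\textbf{Case $\ak=\al$.} Here I would work with a fractional free energy combining the Boltzmann entropy $\intd \rho \ln \rho$ with the interaction potential associated to $K=-\nabla W$. Its time derivative along the flow has the form $-D[\rho] + (\text{production})$, where $D[\rho]\asymp |\sqrt\rho|^2_{H^{\al/2}}$ is the entropy production; combining HLS~\eqref{eq:HLS}, Sobolev $\CS_{d,\al/2}$ and Gagliardo--Nirenberg--Sobolev $\CGNS_{d,\al/2}$ to bound the production by $(\lambda M_0/C_{\ak,d})\,D[\rho]$ yields the sharp threshold \eqref{eq:condSM}; the maximum over two ratios of $\omega$-factors in that formula reflects two alternative routes to the same estimate (direct factorization $\rho=\sqrt\rho\cdot\sqrt\rho$ together with Sobolev, versus a symmetrization identity for $\I$ together with HLS), the weaker of which must be retained. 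Under \eqref{eq:condSM} the free energy is bounded, the entropy stays controlled, and one is reduced to the diffusion-dominated case.

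\textbf{Case $\ak>\al$ and uniqueness.} For $p\in(p_{\ak,\al},p_\ak)$, the same $L^p$ computation combined with HLS and GNS yields a differential inequality of the form
\[
\dt\|\rho\|^p_{L^p} \leq -c\bigl(1 - C\|\rho\|^\gamma_{L^p}\bigr)\,|\rho^{p/2}|^2_{H^{\al/2}},
\]
with positive $c,C,\gamma$ depending on $M_0, d, p, \al, \ak$. Local existence on some interval $[0,T)$ follows from a Picard iteration exploiting the smoothing of $e^{t\I}$, while global existence holds as soon as $C\|\rho^\init\|^\gamma_{L^p}<1$, which defines $C_{\lambda,p}(M_0)$ in \eqref{eq:condEta}. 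Uniqueness is obtained by a Grönwall-type estimate on the weighted $L^1$ (Case~1) or $L^p$ (Case~3) norm of the difference of two solutions; in Case~2 with only $L\ln L$ data the drift $K*\rho$ is not Lipschitz and one resorts to an entropy method inspired by the classical Keller--Segel theory. The \emph{main obstacle} is the sharp tracking of best constants in Case~2, which governs the precise form of the critical mass $C_{\ak,d}$ in~\eqref{eq:condSM}.
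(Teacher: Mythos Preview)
Your a priori estimates --- moment propagation via $|\I(\weight{x}^k)|\lesssim\weight{x}^{k-\al}$, Stroock--Varopoulos plus HLS and Sobolev/GNS for the $L^p$ norms, and the entropy dissipation inequality in the fair-competition case --- match the paper's Section~3.1 essentially line for line. One minor point: in the critical case the paper differentiates only the Boltzmann entropy (not a full free energy), and the $\max$ in \eqref{eq:condSM} comes from the lower bound $\CGNS_{d,\ak/2}\geq\max(\CS_{d,\ak/4},(\CS_{d,\ak/2})^{1/2})$, so one keeps the \emph{better} of the two routes, not the weaker.

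Where you diverge substantially from the paper is in the existence and uniqueness machinery. The paper does \emph{not} use a Duhamel/semigroup fixed point followed by function-space compactness. Instead it represents the regularized equation as the law of an SDE $X_t^\eps=X_0-\lambda\int_0^t K_\eps*\rho_\eps(X_s^\eps)\dd s + Z_t^\al$ driven by an $\al$-stable L\'evy process, proves tightness of $(\L(X^\eps))_\eps$ in $\PP(D([0,T],\R^d))$ via an Ascoli--Arzel\`a bound on the drift increments (controlled in turn by the uniform $L^1_tL^{p_\ak}_x$ estimates), and identifies the limit as a weak solution. Your sketch leaves the compactness step that replaces this entirely unspecified; an Aubin--Lions argument would require time-regularity control that you do not derive. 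For uniqueness, the paper again uses probability: it couples two solutions through two SDEs sharing the same noise and initial datum, and applies a Loeper-type estimate (Lemma~\ref{lem:loep}) showing that $K*\rho$ is Lipschitz if $\rho\in L^{p}$ with $p>p_\ak$ and log-Lipschitz if $p=p_\ak$. The latter is exactly what happens in the critical case, where the entropy estimate only gives $\rho\in L^1((0,T),L^{p_\ak})$; uniqueness then follows from an Osgood--Gr\"onwall argument on $\E[|X_t-Y_t|^2]$. Your proposed ``entropy method'' for uniqueness in Case~2 is the genuine gap: a direct $L^1$ or $L^p$ difference estimate does not obviously close with only $L\ln L$ data, and you give no indication of how to bypass the log-Lipschitz regularity of the drift. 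The coupling/Wasserstein route is the specific idea you are missing.
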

	
	\begin{remark}
		\label{rq:TriBer}
		The constraint $\ak + \al > 1$ comes from the necessity to propagate moments, which is necessary for our notion of solution and gives us compactness. Remark that it is only due to the behaviour at infinity of the interaction kernel, which we denoted by $K_c$, and not to the singularity. Therefore, our theorem would hold also for example for the following kernel
		\begin{equation*}
			K(x) = \frac{x}{|x|^\ak}\chi(x) + \frac{x}{|x|^\gamma}(1-\chi(x)),
		\end{equation*}
		for any $\gamma > 1 - \al$ and which relaxes the condition $\ak + \al > 1$. It is interesting also to notice that formula~\eqref{eq:formulation_sans_moments} could also provide an alternative definition of solution which does not need moments. However, it is not clear whether it is sufficient to provide compactness.
	\end{remark}
	
	\begin{remark}
		The explicit value of $\CHLS_{d,\ak,p}$ for $\ak\in (0,d)$ and $p=q$ in \eqref{eq:HLS} and $\CS_{d,s}$ for $s\in(0,1)$ are known, see for instance \cite{lieb_sharp_1983, lieb_analysis_2001}. Remarking that the HLS conjugate as defined in \eqref{eq:HLS} of $p_{\ak/2}$ is itself, it holds
		\begin{align*}
			\CHLS_{d,\ak,p_{\ak/2}} &=\pi^\frac{\ak}{2} \frac{\G\lt(\frac{d-\ak}{2}\rt)}{\G\lt(d-\frac{\ak}{2}\rt)} \lt(\frac{\G\lt(\frac{d}{2}\rt)}{\G(d)}\rt)^{-1+\frac{\ak}{d}} = \frac{\omega_{2d-\ak}}{\omega_{d-\ak}} \lt(\frac{\omega_{2d}}{\omega_{d}}\rt)^\frac{\ak-d}{d}
			\\
			\CS_{d,s} &= \frac{2^{2s}\pi^s\G\lt(\frac{d+2s}{2}\rt)}{\G\lt(\frac{d-2s}{2}\rt)}\lt(\frac{\G\lt(\frac{d}{2}\rt)}{\G(d)}\rt)^{\frac{2s}{d}} \ \, = (2\pi)^{2s} \frac{\omega_{d-2s}}{\omega_{d+2s}} \lt(\frac{\omega_{2d}}{\omega_{d}}\rt)^\frac{2s}{d},
		\end{align*}
		where we recall that $\omega_d = \frac{2\pi^{d/2}}{\G(d/2)}$.
	\end{remark}
	
	In the case $\ak \leq \al$, this theorem enlarges the existing result by Biler et al. \cite{biler_global_1999}, where global existence is proved for $d=2,3$ in the case $\al\leq \frac{d}{2}$, and is a novelty in higher dimension. Also it is provided with larger class of initial condition, and a uniqueness result. Note that the case $\al=\ak$ is only the object of some remark in \cite[Remark 3.2]{biler_global_1999}. As for the case $\al < \ak < 2$, it seems it has not been treated yet to the best of the authors' knowledges. See also \cite{biler_morrey_2016} and \cite{li_exploding_2010} for the case $\ak = 2$.
	
	Let us briefly sketch the proof of this theorem in the case of an $L\ln L$ initial condition. Formally differentiating the Boltzmann's entropy along \eqref{eq:FKS} (see for instance \cite[Section 2.2]{blanchet_two-dimensional_2006}) provides a control of the $L^1([0,T),L^p)$ for $p\in [1,p_\alpha]$ by fractional Sobolev's embedding, for any initial mass in the \textit{diffusion dominated} case and for small initial mass in the \textit{fair competition} case. Then a slight modification of standard coupling argument enables to obtain stability in this space when $p\in [1,p_a)$ and uniqueness when $p=p_a$. The other assumption on the initial condition are meant to control the $L^1([0,T),L^{p_\ak})$ norm of the solution in the different regimes. 

	When global existence holds, we also retrieve some additional properties as a quantitative rate of convergence to $0$ in the \textit{aggregation dominated} case and a gain of local integrability in the \textit{diffusion dominated} case.
	\begin{theorem}\label{thm:behaviour}
		Let $(\al,\ak)\in[0,2)\times [0,d)$ and $\rho$ be a solution of the \eqref{eq:FKS} equation as given by Theorem~\ref{thm:welpos}.
		
		\noindent$\bullet$ When $\ak < \al$, the gain of integrability is given for any $p\in(1,p_\ak)$ by
		\begin{equation*}
			\|\rho\|_{L^p} \leq C M_0\, t^{-\frac{d}{\al q}} + C_\lambda(M_0).
		\end{equation*}
		
		\noindent$\bullet$ When $\al < \ak$ and for a given $p\in(p_{\ak,\al},p_\ak)$, $\|\rho^\init\|_{L^p} < C_{\lambda,p}(M_0)$ defined by \eqref{eq:condEta}, then there exists a constant $C = C_{\ak,\al,p}(\rho^\init) > 0$ such that 
		\begin{equation*}
			\|\rho\|_{L^p} \leq CM_0\, t^{-\frac{d}{\al q}}.
		\end{equation*}
		
		\noindent$\bullet$ When $\ak = \al$, the condition becomes
		\begin{equation*}
			\lambda M_0 < C_{\ak,d,p} = \frac{4\CS_{d,\ak/2}}{p(d-\ak)\CHLS_{d,\ak,r}}, \quad \frac{1}{r} := \frac{p}{p+1} \frac{1}{p} + \frac{1}{p+1}\frac{1}{p_a},
		\end{equation*}
		which gives both a gain of integrability and an asymptotic behaviour for any $p\in(1,p_\ak)$
		\begin{align}\label{eq:condEtaFC}
			\lambda M_0 &\leq C_{\ak,d,p} \implies \|\rho\|_{L^p} \leq C M_0 \,t^{-\frac{d}{\al q}},
		\end{align}
		where $C$ depends only on $M_0$, $d$, $p$, $\ak$ and $\al$.
	\end{theorem}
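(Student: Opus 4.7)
The plan is to derive, in each of the three regimes, a single differential inequality for $y(t) := \|\rho(t)\|_{L^p}^p$ and integrate it. Testing \eqref{eq:FKS} against $p\rho^{p-1}$ — made rigorous by a mollification $\rho_\e := \rho*\eta_\e$, justified by the regularity from Theorem~\ref{thm:welpos} (and, for $\ak\in(2,d+2)$, by the $L^1_\loc(L^{p_{\ak,2}})$ integrability built into Definition~\ref{def:sol}) — and combining the Stroock--Varopoulos inequality
\[
-\intd u^{p-1}\,\I(u)\dd x \;\geq\; \tfrac{4(p-1)}{p^2}\,c_{d,\al}\,|u^{p/2}|_{H^{\al/2}}^2
\]
with $\divg K = (d-\ak)|x|^{-\ak}$ (distributionally), one obtains the $L^p$-energy estimate
\[
\dt\|\rho\|_{L^p}^p \;\leq\; -\tfrac{4(p-1)}{p}\,c_{d,\al}\,|\rho^{p/2}|_{H^{\al/2}}^2 \;+\; \lambda(p-1)(d-\ak)\iintd\frac{\rho(x)^p\,\rho(y)}{|x-y|^\ak}\dd x\dd y.
\]

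The interaction term is controlled by Hardy--Littlewood--Sobolev on the pair $(\rho^p,\rho)$: fixing conjugate exponents $(a,b)$ with $\tfrac{1}{a}+\tfrac{1}{b}=2-\tfrac{\ak}{d}$ gives $\iintd \rho^p(x)\rho(y)|x-y|^{-\ak}\dd x\dd y \leq \CHLS_{d,\ak,a}\|\rho^{p/2}\|_{L^{2a}}^2\|\rho\|_{L^b}$. Using the Sobolev inequality $\CS_{d,\al/2}\|\rho^{p/2}\|_{L^{2d/(d-\al)}}^2\leq |\rho^{p/2}|_{H^{\al/2}}^2$ together with interpolation of $\|\rho\|_{L^{pa}}$ and $\|\rho\|_{L^b}$ between $\|\rho\|_{L^1}=M_0$ and $\|\rho\|_{L^{pd/(d-\al)}}$, the bound takes the form
\[
\iintd\frac{\rho(x)^p\rho(y)}{|x-y|^\ak}\dd x\dd y \;\leq\; C\,M_0^{\mu}\,|\rho^{p/2}|_{H^{\al/2}}^{2\nu}\,\|\rho\|_{L^p}^{p\kappa},
\]
with explicit non-negative exponents $\mu,\nu,\kappa$ determined by the HLS relation, and $\nu<1$, $\nu=1$, or $\nu>1$ precisely when $\ak<\al$, $\ak=\al$, or $\ak>\al$, respectively — the natural dichotomy of the equation.

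Each regime is then handled by plugging this into the $L^p$ identity. In the \emph{diffusion-dominated} case $\ak<\al$, since $\nu<1$, Young's inequality absorbs the $H^{\al/2}$ factor into the dissipation, leaving a forcing at most linear in $y$; combined with the Sobolev/interpolation lower bound $|\rho^{p/2}|_{H^{\al/2}}^2\geq c\,M_0^{-p\sigma}y^{1+\sigma}$ (derived from $\|\rho\|_{L^p}\leq\|\rho\|_{L^1}^\theta\|\rho\|_{L^{pd/(d-\al)}}^{1-\theta}$ with $\theta=\al/(\al+d(p-1))$ and $\sigma=\al q/(dp)$), this yields the Bernoulli-type ODI
\[
y'(t) \;\leq\; -c_1\,M_0^{-p\sigma}\,y(t)^{1+\sigma} + C\,y(t),
\]
whose explicit integration (via the substitution $z=y^{-\sigma}$) gives $\|\rho\|_{L^p}\leq CM_0\,t^{-d/(\al q)}+C_\lambda(M_0)$. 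In the \emph{aggregation-dominated} case $\ak>\al$, one has $\nu>1$ and the chain instead produces $y'\leq -c_1 y^{1+\sigma}+c_2 y^{1+\tau}$ with $\tau>\sigma$; the smallness $\|\rho^\init\|_{L^p}\leq C_{\lambda,p}(M_0)$ is exactly the barrier below which the negative term dominates, so $y$ remains sub-critical for all time and $y'\leq -(c_1/2)y^{1+\sigma}$ yields the pure decay $\|\rho\|_{L^p}\leq CM_0\,t^{-d/(\al q)}$. In the \emph{fair-competition} case $\ak=\al$, since $\nu=1$, the HLS--Sobolev chain collapses to
\[
\iintd\frac{\rho(x)^p\rho(y)}{|x-y|^\ak}\dd x\dd y \;\leq\; \frac{\CHLS_{d,\ak,r}}{\CS_{d,\al/2}}\,M_0\,|\rho^{p/2}|_{H^{\al/2}}^2,
\]
where the specific exponent $r$ in the statement comes from the optimal interpolation path; the coefficient in front of $|\rho^{p/2}|_{H^{\al/2}}^2$ in the resulting ODI becomes negative exactly under $\lambda M_0 < C_{\ak,d,p}$, and the same Sobolev/interpolation lower bound transforms the dissipative inequality into $y'\leq -cy^{1+\sigma}$, giving again $\|\rho\|_{L^p}\leq CM_0\,t^{-d/(\al q)}$.

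The main obstacle is the careful bookkeeping of exponents in the HLS--Sobolev--interpolation chain, especially in the fair-competition case, where the sharp constants must line up to produce the exact threshold $C_{\ak,d,p}=4\CS_{d,\al/2}/(p(d-\ak)\CHLS_{d,\ak,r})$; verifying that the interpolation leading to the exponent $r$ in the statement is the one yielding this optimal constant requires some work. A secondary and essentially routine technical point is the rigorous justification of the $L^p$ derivation at the weak-solution level, which proceeds by mollification and passage to the limit using the regularity and compactness supplied by Theorem~\ref{thm:welpos}.
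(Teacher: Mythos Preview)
Your approach is essentially the one the paper takes (the proof of this theorem is the content of the ``$L^p$ estimates'' Proposition in Section~3.1): the same Stroock--Varopoulos/HLS energy identity, Sobolev embedding for $\rho^{p/2}$, interpolation against $M_0$, and ODI analysis. Your fair-competition computation in particular is exactly the paper's, and the two-endpoint interpolation $\|\rho\|_{L^r}^{p+1}\le M_0\|\rho\|_{L^{\tilde r}}^p$ (with $\tilde r=pd/(d-\al)$) together with Sobolev gives precisely the threshold $C_{\ak,d,p}$.

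There is, however, a real inconsistency in your treatment of the aggregation-dominated case $\ak>\al$. If you interpolate $\|\rho\|_{L^{pa}}$ and $\|\rho\|_{L^b}$ only between $L^1$ and $L^{\tilde r}$ as you describe, the resulting exponent is $\nu=\frac{p-1+\ak/d}{p-1+\al/d}>1$, and then the forcing $CM_0^\mu\,|\rho^{p/2}|_{H^{\al/2}}^{2\nu}$ \emph{cannot} be absorbed into the dissipation $-c\,|\rho^{p/2}|_{H^{\al/2}}^{2}$ by Young's inequality: the power is on the wrong side. Since there is no a~priori upper bound on $|\rho^{p/2}|_{H^{\al/2}}$ in terms of $y=\|\rho\|_{L^p}^p$, you cannot pass from this to the claimed ODI $y'\le -c_1y^{1+\sigma}+c_2y^{1+\tau}$. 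The paper closes this gap by inserting $L^p$ as a \emph{third} interpolation endpoint, with a free parameter $\eps\in(0,\eps_M]$: this brings the $L^{\tilde r}$-exponent down to $p(1-\eps)<p$ (so Young's inequality does apply), at the price of an extra factor $\|\rho\|_{L^p}^{p\eps(1+b_0)}$. Choosing $\eps=\eps_M$ then yields exactly $y'\le C_1\,y^{1+b}-C_2\,y^{1+b_1}$ with $b>b_1$, after which your barrier argument goes through verbatim. Your bound already carries a factor $\|\rho\|_{L^p}^{p\kappa}$, so you are close; but with the two-endpoint interpolation you state, $\kappa=0$ and the step fails as written.
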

	
	\begin{remark}
		If $\rho$ is a weak solution to the \eqref{eq:FKS} equation as given by definition~\ref{def:sol} with $\ak=\al$ and $\lambda M_0<C_{d,\ak,p}$ for a given $p>1$, we are not able to assert the uniqueness unless we assume that $\rho^\init\in L\ln L$. 
	\end{remark}
	
	Finally we obtain a finite time blow-up for even solutions to \eqref{eq:FKS} under some concentration of mass condition stated in the
	\begin{theorem}\label{thm:BU}
		Let $(\al,\ak)\in [0,2)\times [1,d)$ be such that $\al < \ak$, $k\in(0,\al)$ and $\rho\in C^0(\R_+,L^1_k)$ be an even nonnegative weak solution to the \eqref{eq:FKS} equation with initial condition $\rho^\init \in L^1_k$ verifying
		\begin{align}\label{eq:condBU_1}
			\intd\rho^\init(x) \weight{x}^k\dd x &\leq C^* \lambda^\frac{k}{2(\ak-k)} M_0^\frac{2\ak-k}{2(\ak-k)} &&\text{if } \al > 1
			\\\label{eq:condBU_2}
			\intd \rho^\init(x) |x|^k\dd x &\leq C^*_2M_0 \text{   and   } \lambda M_0 \geq C^*_3 &&\text{if } \al < 1
		\end{align}
		for given constants $C^*$, $C^*_2$, $C^*_3$ depending only on $d$, $\ak$, $\al$ and $k$. Then the solution ceases to exist in finite time.
	\end{theorem}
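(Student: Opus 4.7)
The plan is to apply the classical moment method: for the even nonnegative solution $\rho$, introduce a fractional moment $F(t) := \intd \rho(t,x)\varphi(x)\dd x$, derive an ordinary differential inequality for $F$, and show that under \eqref{eq:condBU_1} or \eqref{eq:condBU_2}, $F$ would reach $0$ in finite time, contradicting $F > 0$ (which follows from $\rho \geq 0$ with $M_0 > 0$).

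First I would pick the test function by regime. For $\al > 1$, take $\varphi(x) = \weight{x}^k$, for which $I\varphi$ is continuous and bounded, with $|I\varphi| \lesssim \weight{x}^{k-\al}$, so the diffusion term satisfies $\intd \rho\,I\varphi \leq C_{d,\al,k} M_0$. For $\al < 1$, take $\varphi(x) = |x|^k$, for which $I\varphi(x) = C_{d,\al,k}|x|^{k-\al}$ by homogeneity. Plugging $\varphi$ in \eqref{eq:varFKS} and symmetrizing in $(x,y)$ gives
\begin{equation*}
F'(t) = \intd \rho\,I\varphi\dd x - \frac{\lambda}{2}\iintd K(x-y)\cdot\bigl(\nabla\varphi(x)-\nabla\varphi(y)\bigr)\rho(x)\rho(y)\dd x\dd y.
\end{equation*}
Expanding the aggregation integrand as $(|x|^k+|y|^k) - (|x|^{k-2}+|y|^{k-2})x\cdot y$ over $|x-y|^\ak$ and performing the change of variable $y\mapsto -y$ (exploiting $\rho(-y)=\rho(y)$) transforms the cross term into an integral of $x\cdot y\bigl(|x-y|^{-\ak} - |x+y|^{-\ak}\bigr)$, a pointwise nonnegative quantity since both factors have the same sign. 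This reduces the aggregation integrand essentially to the diagonal part $\iintd \frac{|x|^k+|y|^k}{|x-y|^\ak}\rho\rho$.

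Next I would bound this aggregation from below: using $|x|^k+|y|^k \geq 2^{1-k}|x-y|^k$ and Jensen's inequality applied to the convex decreasing map $t\mapsto t^{-(\ak-k)/k}$ on the probability $\rho\otimes\rho/M_0^2$, together with $\iintd|x-y|^k\rho(x)\rho(y)\dd x\dd y \leq 2^k M_0 F$, yields
\begin{equation*}
\iintd \frac{|x|^k+|y|^k}{|x-y|^\ak}\rho(x)\rho(y)\dd x\dd y \geq C\,\frac{M_0^{(\ak+k)/k}}{F(t)^{(\ak-k)/k}}.
\end{equation*}
Combining these estimates produces, in the case $\al>1$, an ODI of the form
\begin{equation*}
F'(t) \leq C_1 M_0 - C_2 \lambda\,\frac{M_0^{(\ak+k)/k}}{F(t)^{(\ak-k)/k}}.
\end{equation*}
Condition \eqref{eq:condBU_1} is calibrated so that the right-hand side is initially negative; $F$ is then decreasing, the inequality self-preserves, and integrating $(F^{\ak/k})' \leq -c\lambda M_0^{(\ak+k)/k}$ shows $F$ hits $0$ in finite time, a contradiction. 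The case $\al<1$ is treated analogously with $\varphi(x)=|x|^k$: the diffusion contribution $\intd\rho|x|^{k-\al}\dd x$ is then controlled by the aggregation thanks to the strong-aggregation hypothesis $\lambda M_0\geq C^*_3$, while $\intd\rho^\init|x|^k\leq C^*_2M_0$ ensures the ODI starts negative.

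The main obstacle will be the case $\al<1$, where $k<1$ and $|x|^k$ is not convex, so the natural monotonicity $(x-y)\cdot(|x|^{k-2}x-|y|^{k-2}y)\geq 0$ fails; the two conditions in \eqref{eq:condBU_2} are precisely what lets the aggregation lower bound swallow the diffusion term $\intd \rho|x|^{k-\al}$, which cannot be bounded by $M_0$ and $F$ alone. A secondary technical point is that neither $|x|^k$ nor $\weight{x}^k$ belongs to $C^2_c$, so the ODI must first be derived on a truncated/mollified test function, and the limit then passed using $\rho\in C^0([0,T),L^1_k)$.
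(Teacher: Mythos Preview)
Your moment strategy is correct in outline, but the handling of the cross term contains a sign error that breaks the argument. After expanding
\[
K(x-y)\cdot(\nabla\varphi(x)-\nabla\varphi(y))=\frac{k}{|x-y|^\ak}\Bigl((|x|^k+|y|^k)-(|x|^{k-2}+|y|^{k-2})\,x\cdot y\Bigr),
\]
the evenness trick does show that
\[
J:=\iintd \frac{(|x|^{k-2}+|y|^{k-2})\,x\cdot y}{|x-y|^\ak}\,\rho(x)\rho(y)\dd x\dd y\ \ge\ 0.
\]
But the aggregation equals the diagonal part \emph{minus} $J$, so $J\ge 0$ only yields aggregation $\le$ diagonal, i.e.\ an \emph{upper} bound. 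To make $F'$ negative you need a \emph{lower} bound on the aggregation; your Jensen estimate on the diagonal therefore does not transfer. There is no pointwise inequality of the form $(x-y)\cdot(|x|^{k-2}x-|y|^{k-2}y)\ge c\,|x-y|^k$ for $k<2$ that would rescue this directly.

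The paper avoids the trap by two devices you are missing. First, it uses a hybrid weight $m(x)=\varphi(|x|)|x|^\ak+\varphi^c(|x|)|x|^k$, so that near the origin the test function is the \emph{convex} power $|x|^\ak$ (recall $\ak\ge 1$), not $|x|^k$. Second, and this is the crucial step, in the convex regime it first uses $|x-y|^\ak\le 2^\ak(|x|^\ak+|y|^\ak)$ on the full nonnegative integrand, replacing the denominator by one that is even in $x$ and in $y$ separately; only then does the cross term become genuinely odd in $x$ (for fixed $y$) and vanish \emph{exactly}, giving $\mathcal I_2\ge \iintd \tfrac{g(x,y)}{2^\ak(|x|^\ak+|y|^\ak)}\rho\rho\ge C\,M_{k-\ak}^2$. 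H\"older between $M_0$, $M_k$ and $M_{k-\ak}$ then closes the ODI.

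For $\al<1$ (hence $k<1$) the paper's argument is still further from yours: it splits the double integral into regions according to $|x|$, $|y|$, $|x-y|$ relative to the cutoff radius $r$. The dominant positive contribution comes from $\{|x|,|y|\le r\}$, where $m=|x|^\ak$ is convex and the same vanishing-cross-term mechanism produces $\ge c\bigl(\int_{B_r}\rho\bigr)^2$; the remaining regions are error terms controlled by $\int_{B_r^c}\rho\lesssim Y$. Your plan to absorb $\int\rho|x|^{k-\al}$ into the aggregation via $\lambda M_0\ge C_3^*$ is plausible, but it cannot work until the lower bound on the aggregation itself is secured.
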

	
	The proof of this theorem relies on the time differentiation of an adequate moment, which is adapted to the fractional diffusion and not Newtonian aggregation case, and which leads to a contradiction.
	
	One of the strength of the result of Theorem~\ref{thm:BU}, even if it deals only with even solutions, is that it applies to weakly singular interactions, i.e. $\ak < 2$. Indeed it seems that so far most of finite time blow-up results for aggregation fractional diffusion equation dealt with the case of a Newtonian interaction at the exception of \cite[Theorem~2.2]{biler_blowup_2009}, which deals with interactions of the from $\frac{x}{|x|}$ near the origin. Considering a less singular kernel than the Newtonian erases some algebraic facilities and requires a thinner estimation of the competing terms. We emphasize that it also covers the purely aggregative case $\al=0$, giving stronger results than \cite{bertozzi_finite-time_2007, li_global_2010} for the case $\ak\geq 2$. For $\ak\leq 2$, the blow-up was already proved in \cite{bertozzi_blow-up_2009} using a Lagrangian point of view.
	
	Finally, let us comment about the disjunction of the different global existence and finite time blow-up conditions. Condition~\eqref{eq:condEta} in Theorem~\ref{thm:welpos} is heuristically in contradiction with the assumption of Theorem~\ref{thm:BU}. First remark that if we require that $\rho^\init$ is concentrated around zero, for instance with a condition of the type $\|\rho^\init\|_{L^1_k} < C M_0$ for a given constant $C$ which does not depend on $\rho^\init$, then the condition of blow-up~\eqref{eq:condBU_1} is equivalent to
	\begin{equation*}
		\lambda M_0 \geq C',
	\end{equation*}
	where $C'$ is a positive constant that depends only on $\ak$, $\al$, $k$ and $d$. Moreover, in a more general setting, for $k>0$, $q=p'\in (1,\infty)$ and $\rho\in L^1_k\cap L^p$, the following inequality
	\begin{equation*}
		\intd \rho \leq C \lt( \intd \rho\weight{x}^k \rt)^\frac{d}{d+kq}\|\rho\|_{L^p}^\frac{kq}{d+kq},
	\end{equation*}
	holds with $C$ depending only on $d$, $k$ and $q$. With fixed $M_0$, this inequality is enough to exclude a priori \eqref{eq:condEta} from \eqref{eq:condBU_1} or \eqref{eq:condBU_2}, at least in the range of arbitrarily large (or small) $\|\rho^\init\|_{L^p}$ or $\intd \rho^{\init}\weight{x}^k$. When this is not the case, we expect that no other behaviour appear in the remaining cases.
	
	\begin{figure}[h]\centering\label{fig3}
	 	\mbox{{\includegraphics[scale=0.7]{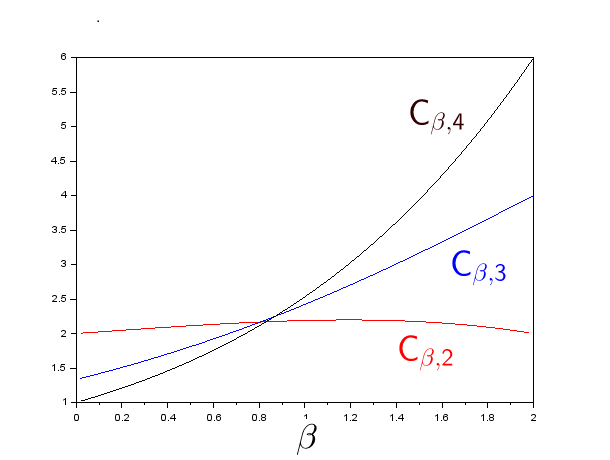}}}
	 	\caption{Lower bound of the threshold of condition~\eqref{eq:condSM} for $d=2,3,4$ and $\ak\in(0,2)$. For the case $\ak\leq \frac{1}{2}$ see Remark~\ref{rq:TriBer}.}
	\end{figure}
	
	We restrict ourselves to check that in the simple case $\al=\ak=2<d$, the global well-posedness condition \eqref{eq:condSM} is coherent with the classical large mass blow-up criteria. Indeed take a solution to \eqref{eq:FKS} in that case, it is possible to consider initial condition $\rho^\init\in L_2^1$ and then classically
	\begin{align*}
		\dt \intd \rho\, |x|^2 &=  \intd \rho \Delta (|x|^2) - \lambda \iintd K(x-y)\cdot(x-y)\rho(\d x)\rho(\d y)
		\\
		&= 2dM_0 - \lambda M_0^2
		\\
		&= 2dM_0 \lt(1 - \frac{\lambda M_0}{2d}\rt),
	\end{align*}
	so that the condition $\lambda M_0>2d$ yields to final time blow-up. And since $\omega_{a+2} = \frac{2\pi}{a}\omega_{a}$, it holds
	\begin{align*}
		C_{2,d}&=\frac{4(2\pi)^2}{(d-2)} \frac{\omega_{2d}}{\omega_d} \frac{\omega_{d-2}}{\omega_{2d-2}} \max\lt(\frac{\omega_{d-2}}{\omega_{d+2}},\frac{\omega_{d-1}^2}{\omega_{d+1}^2}\rt)
		\\
		&=\frac{4(2\pi)^2}{(d-2)} \frac{d-2}{2d-2} \max\lt(\frac{d(d-2)}{(2\pi)^2},\frac{(d-1)^2}{(2\pi)^2}\rt)
		\\
		&= 2(d-1) < 2d,
	\end{align*}
	so that the two conditions cannot be realized simultaneously.

\section{Proof of Theorem~\ref{thm:welpos} and Theorem~\ref{thm:behaviour}}\label{sec:welpos}

\subsection{A Priori estimates.}
	We begin this section with an a priori moment estimate given in the
	\begin{prop}[Propagation of weight]\label{prop:propag_m}
		Assume $1-\ak < \al$ and $\ak<2$ if $\al <1$ and let $k\in [(1-\ak)_+,\al)$ and $\rho$ be a solution of the \eqref{eq:FKS} equation with initial condition $\rho^\init\in L^1_k$. Then 
		\begin{equation*}
			\rho \in L^\infty_{\mathrm{loc}}(\R_+,L^1_k).
		\end{equation*}
	\end{prop}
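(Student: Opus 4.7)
The plan is to test the weak formulation~\eqref{eq:varFKS} against $\varphi(x)=\weight{x}^k$ in order to derive a Grönwall inequality for $M_k(t):=\intd \weight{x}^k\rho(t,\d x)$. Since $\weight{\cdot}^k\notin C^2_c$, I would first work with the cut-off $\varphi_R(x):=\weight{x}^k\zeta(x/R)$, where $\zeta\in C^\infty_c(\R^d)$ is radial with $\zeta\equiv 1$ on $B_1$, apply Definition~\ref{def:sol}, and pass to the limit $R\to+\infty$ once uniform-in-$R$ estimates are established. Symmetrizing the interaction term via the oddness of $K$, this formally gives
\[
\dt M_k(t) = \intd\rho(t)\,\I(\weight{\cdot}^k) - \frac{\lambda}{2}\iintd K(x-y)\cdot\bigl(\nabla\weight{x}^k-\nabla\weight{y}^k\bigr)\rho(t,\d x)\rho(t,\d y),
\]
and the task is to control each piece of the right-hand side by $C_1(M_0)+C_2(M_0)\,M_k(t)$.

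For the diffusion term I would use that $k<2$ implies $|\nabla^j\weight{x}^k|\leq C_j\weight{x}^{k-j}$ for $j=1,2$; splitting the singular integral~\eqref{def:lapfrac} at the scale $\weight{x}$ and combining the second-order Taylor expansion in the near zone with the growth bound in the far zone yields the pointwise estimate
\[
\bigl|\I(\weight{\cdot}^k)(x)\bigr|\leq C\,\weight{x}^{k-\al},
\]
which lies in $L^\infty$ thanks to $k<\al$, so this contribution is bounded by $CM_0$. For the interaction term I split $K=K_0+K_c$. On the support of $K_0$ one has $|x-y|\leq 2$, and a Taylor expansion gives $|\nabla\weight{x}^k-\nabla\weight{y}^k|\leq C|x-y|$; combined with $|K_0(x-y)|\leq C|x-y|^{1-\ak}$ the integrand is dominated by $C|x-y|^{2-\ak}\indic_{|x-y|\leq 2}$. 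When $\ak<2$ this is bounded and the $K_0$-contribution is controlled by $CM_0^2$; when $\ak\geq 2$ (hence $\al\geq 1$ under the standing assumptions), $|x-y|^{2-\ak}$ is weakly singular and the Hardy-Littlewood-Sobolev inequality, applied with the $L^1_{\loc}(L^{p_{\ak,2}})$-regularity built into Definition~\ref{def:sol}, provides a bound of the form $C\|\rho\|_{L^1}\|\rho\|_{L^{p_{\ak,2}}}$.

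The $K_c$-contribution is where the hypothesis $k\geq (1-\ak)_+$ is decisive. Using $|K_c(x-y)|\leq C\weight{x-y}^{(1-\ak)_+}\leq C\bigl(\weight{x}^{(1-\ak)_+}+\weight{y}^{(1-\ak)_+}\bigr)$ together with $|\nabla\weight{x}^k|\leq C\weight{x}^{k-1}$, the double integral expands into a sum of products $\bigl(\intd\weight{x}^a\rho\bigr)\bigl(\intd\weight{y}^b\rho\bigr)$ with $0\leq a,b\leq k$ (the upper bound $a\leq k$ is precisely what forces $(1-\ak)_+\leq k$, so that the $\weight{\cdot}^{(1-\ak)_+}$ factor can be absorbed into $\weight{\cdot}^k$). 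Each factor interpolates as $M_0^{1-a/k}M_k(t)^{a/k}$, yielding
\[
\dt M_k(t) \leq C_1(M_0) + C_2(M_0)\,M_k(t),
\]
and Grönwall concludes that $M_k\in L^\infty_{\loc}(\R_+)$.

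The main obstacle is the rigorous justification of the truncation argument: one needs $\I(\varphi_R)\to\I(\weight{\cdot}^k)$ in an $L^1(\rho(t,\d x))$-dominated sense as $R\to+\infty$, and the analogous statement for the $K_0$ and $K_c$ integrals against $\varphi_R$; both follow from the pointwise estimates above once one checks that the dominating functions are $\rho(t,\cdot)$-integrable along the approximating sequence supplied by the existence scheme. A secondary technical point is the sharp decay estimate $|\I(\weight{\cdot}^k)|\lesssim \weight{\cdot}^{k-\al}$, which requires cutting the kernel of $\I$ into near, intermediate and far zones at scale $\weight{x}$ and exploiting the symmetry of $\weight{\cdot}^k$ to eliminate the first-order contribution of the principal value.
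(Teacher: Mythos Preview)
Your argument is correct and runs along the same lines as the paper's, but with a genuinely different organization of the interaction term. The paper distinguishes the regimes $k\geq 1$ and $k<1$: for $k\geq 1$ it exploits the convexity of $\weight{\cdot}^k$, which makes the symmetrized interaction integrand $h_m(x,y)=\tfrac{(\nabla m(x)-\nabla m(y))\cdot(x-y)}{2|x-y|^\ak}$ nonnegative, so the entire drift contribution is simply dropped; for $k<1$ it splits at $|x-y|=R$, uses the Lipschitz bound on $\nabla m$ in the near region, and in the far region expands $h_m$ algebraically to isolate the cross term $(x\cdot y)(\weight{x}^{k-2}+\weight{y}^{k-2})$ and then treats separately $|y|>2|x|$ and $|x|<|y|<2|x|$. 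Your $K_0/K_c$ decomposition with the crude bounds $|K_c|\lesssim\weight{\cdot}^{(1-\ak)_+}$ and $|\nabla\weight{\cdot}^k|\lesssim\weight{\cdot}^{k-1}$ is more uniform: it avoids the case distinction on $k$ and the geometric splitting on $|x|/|y|$, at the price of not using the favorable sign when $k\geq 1$. Both routes land on a linear Grönwall inequality for $M_k$; the paper's is slightly sharper (an explicit exponential bound with identified constants), while yours is structurally simpler and, via the HLS step, covers the corner case $\ak\geq 2$, $k<1$ directly through the $L^{p_{\ak,2}}$-regularity built into Definition~\ref{def:sol}. One small point worth making explicit in your write-up: the products $M_aM_b$ arising from the $K_c$ expansion always satisfy $a+b\leq k$, so the interpolation yields $M_0^{2-(a+b)/k}M_k^{(a+b)/k}$ with exponent at most $1$ on $M_k$, which is what makes the Grönwall linear rather than Riccati.
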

	
	\begin{proof}
		Let $m=\weight{x}^k$ and $M_k = \|\rho\|_{L^1_k}$. When $k\geq 1$, the convexity of $m$ leads to
		\begin{align}\label{eq:d_t_moment}
			\frac{\d M_k}{\d t} &= \intd \rho \I(m) - \lambda\iintd h_m(x,y)\rho(\d x)\rho(\d y)
			\\\nonumber
			&\leq \intd\rho \I(m),
		\end{align}
		where $h_m(x,y) = \frac{(\nabla m(x)-\nabla m(y))\cdot(x-y)}{2|x-y|^\ak} \geq 0$. From \cite[Remark 4.2]{biler_blowup_2010} and \cite[Proposition 2.2]{lafleche_fractional_2018}, we know that for any $k\in(0,\al)$, 
		\begin{equation}\label{eq:I_m}
			\I(m) \leq C_{\al,k} m(x)\weight{x}^{-\al}.
		\end{equation}
		Since $m(x)\weight{x}^{-\al}\leq 1$, the following inequality holds
		\begin{align*}
			\frac{\d M_k}{\d t} &\leq C_{\al,k} M_0.
		\end{align*}
		When $k\in [1-\ak,\al\wedge 1)$, we decompose the second term in \eqref{eq:d_t_moment} as the sum of the integral over the domain $|x-y|<R$ and its complementary for a given $R>0$. Since $\nabla m$ is Lipschitz, we obtain
		\begin{align*}
			- \iint_{|x-y|\leq R} h_m(x,y)\rho(\d x)\rho(\d y)&\leq C\iint_{|x-y|\leq R} {|x-y|^{2-\ak}}\rho(\d x)\rho(\d y)
			\\
			&\leq CR^{2-\ak}M_0^2,
		\end{align*}
		where $C = \|\nabla^2 m\|_{L^\infty}$. The other part can be controlled as follows
		\begin{align*}
			- \iint_{|x-y|>R} h_m(x,y)\rho(\d x)\rho(\d y)&\leq k\iint_{|x-y|>R} \frac{(x\cdot y)(\weight{x}^{k-2}+\weight{y}^{k-2})}{|x-y|^\ak}\rho(\d x)\rho(\d y)
			\\
			&\leq 2k\iint_{|x-y|>R} \frac{(x\cdot y)\weight{x}^{k-2}}{|x-y|^\ak}\rho(\d x)\rho(\d y)
			\\
			&\leq 4k\iint_{|x-y|>R,|x|>|y|} \frac{|x|| y|\weight{x}^{k-2}}{|x-y|^\ak}\rho(\d x)\rho(\d y)
			\\
			&\leq 4k(I_1 + I_2),
		\end{align*}
		where
		\begin{align*}
			I_1 &= \iint_{|x-y|>R,2|x|<|y|} \frac{| y|\weight{x}^{k-1}}{|x-y|^\ak}\rho(\d x)\rho(\d y)
			\\
			I_2 &= \iint_{|x-y|>R,|x|<|y|<2|x|} \frac{| y|\weight{x}^{k-1}}{|x-y|^\ak}\rho(\d x)\rho(\d y).
		\end{align*}
		Since $|x-y|>||y|-|x||>|y||1-|x|/|y|| > |y|/2$ when $|y|>2|x|$, we get
		\begin{align*}
			I_1 &\leq 2^\ak\iint_{|x-y|>R,2|x|<|y|} | y|^{1-\ak}\weight{x}^{k-1}\rho(\d x)\rho(\d y)
			\\
			&\leq 2^\ak M_{1-\ak}M_0.
		\end{align*}
		For $I_2$, we use the fact that $|y|<2\weight{x}$ to obtain
		\begin{align*}
			I_2 &\leq \frac{2}{R^\ak}\iint_{|x-y|>R,|x|<|y|<2|x|}\weight{x}^{k}\rho(\d x)\rho(\d y)
			\\
			&\leq \frac{2}{R^\ak}M_k M_0.
		\end{align*}
		Combining these three inequalities with \eqref{eq:d_t_moment} and \eqref{eq:I_m}, we obtain
		\begin{align*}
			\frac{\dd M_k}{\dd t} &\leq C_{\al,k} M_{k-\al} + \lambda M_0 \lt(2^{1-\ak} M_{1-\ak} + \frac{2}{R^\ak} M_k + CR^{2-\ak}M_0\rt).
		\end{align*}
		In particular, since $1-\ak \leq k$ and $k-\al < 0$, we get
		\begin{align*}
			\frac{\dd M_k}{\dd t} &\leq  M_0 \lt(C_{\al,k,M_0} + \lambda\lt(2^{1-\ak} + \frac{2}{R^\ak}\rt) M_k\rt).
		\end{align*}
		By Gronwall's Lemma, this leads to
		\begin{align*}
			M_k &\leq \lt(M_k^\init + \frac{C_{\al,k,M_0}}{\lambda C_{\ak,R}}\rt) e^{\lambda C_{\ak,R} M_0t},
		\end{align*}
		which proves the result.
	\end{proof}
		
	The second type of estimates are a priori bounds of integrability. Let us first briefly emphasize that the quantities we estimate will take the form
	\begin{equation*}
		\intd \Phi(u(x))\dd x,
	\end{equation*}
	where $u\geq 0$ and $\Phi : \R_+ \to \R_+$ is a nondecreasing convex mapping such that $\Phi(0) = 0$ and $u\mapsto u\,\Phi''(u)\in L^1_\mathrm{loc}$. Then we can define 
	\begin{align}\label{eq:def_Psi}
		\Psi(u) &:= \int_0^u v\,\Phi''(v)\dd v
		\\\label{eq:def_psi}
		\psi(u) &:= \frac{1}{2}\int_0^u \sqrt{\Phi''}.
	\end{align}
	For $p=q'>1$ and $u\geq0$, we recover Lebesgue norms and Boltzmann's entropy as follow
	\begin{align*}
		\Phi_p(u) := \tfrac{1}{p-1}u^p \implies & \Psi_p(u) = u^p
		\\
		&\psi_p(u) = \tfrac{2}{\sqrt{p}} u^{p/2}
		\\
		\Phi_1(u) := u\ln(u) \implies & \Psi(u) = u
		\\
		&\psi_1(u) = 2 u^{1/2}.
	\end{align*}

	\begin{lem}[General estimate]
		\label{Genest}
		Assume that $(\al,\ak)\in (0,2]\times(0,d)$ (with $\al\neq 2$ if $d = 2$) and let $\rho$ be a smooth solution to the \eqref{eq:FKS} equation, $\Phi$ be a nondecreasing convex mapping, $\Psi$ and $\psi$ be defined respectively by \eqref{eq:def_Psi} and \eqref{eq:def_psi} and $b\in\lt(1,p_\ak\rt)$. Then there holds
		\begin{align}\label{eq:estim0}
			\dt \lt(\intd \Phi(\rho)\rt) &\leq \lambda \lt(d-\ak\rt)\CHLS_{d,\ak,b}\lt\|\rho\rt\|_{L^s}\|\Psi(\rho)\|_{L^b} - |\psi(\rho)|_{H^\frac{\al}{2}}^2,
			\\\label{eq:estim}
			&\leq \lambda \lt(d-\ak\rt)\CHLS_{d,\ak,b}\lt\|\rho\rt\|_{L^s}\|\Psi(\rho)\|_{L^b} - \CS_{d,\al/2} \|\psi(\rho)\|^2_{L^{\tilde{b}}},
		\end{align}
		where
		\begin{align*}
			\frac{1}{s} = 2-\frac{\ak}{d} - \frac{1}{b}, \quad	\frac{2}{\tilde{b}} = 1 - \frac{\al}{d}.
		\end{align*}
	\end{lem}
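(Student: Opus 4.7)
I would formally differentiate $\intd \Phi(\rho)$ along \eqref{eq:FKS}. For smooth $\rho$, multiplying by $\Phi'(\rho)$ and integrating yields
\[
\dt \intd \Phi(\rho) = \intd \Phi'(\rho)\,\I(\rho) + \lambda \intd \Phi'(\rho)\,\divg((K*\rho)\rho).
\]
The strategy is then to bound the drift contribution via Hardy--Littlewood--Sobolev, and to control the dissipation by a Stroock--Varopoulos type inequality, converting the resulting $H^{\al/2}$ bound into an $L^{\tilde b}$ bound through the fractional Sobolev embedding.

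\textbf{The drift term.} Integration by parts combined with the identity $\nabla\Psi(\rho) = \rho\,\Phi''(\rho)\nabla\rho$, which follows from $\Psi'(u) = u\Phi''(u)$, gives
\[
\intd \Phi'(\rho)\divg((K*\rho)\rho) = -\intd (K*\rho)\cdot\nabla\Psi(\rho) = \intd \Psi(\rho)\,\divg(K*\rho).
\]
A direct computation shows $\divg K = (d-\ak)|x|^{-\ak}$, locally integrable since $\ak<d$, so the drift contribution equals $\lambda(d-\ak)\iintd \Psi(\rho(x))|x-y|^{-\ak}\rho(y)\dd x\dd y$. Then \eqref{eq:HLS} with the pair $(b,s)$ satisfying $\tfrac{1}{b}+\tfrac{1}{s}=2-\tfrac{\ak}{d}$ produces the bound $\lambda(d-\ak)\,\CHLS_{d,\ak,b}\|\Psi(\rho)\|_{L^b}\|\rho\|_{L^s}$; the admissibility $b\in(1,p_\ak)$ is precisely what forces $s\in(1,\infty)$.

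\textbf{The diffusion term.} Symmetrizing in the singular integral representation \eqref{def:lapfrac} yields
\[
\intd \Phi'(\rho)\I(\rho) = -\frac{c_{d,\al}}{2}\iintd \frac{(\Phi'(\rho(x))-\Phi'(\rho(y)))(\rho(x)-\rho(y))}{|x-y|^{d+\al}}\dd x \dd y.
\]
The Stroock--Varopoulos pointwise bound, obtained by Cauchy--Schwarz applied to $\int_b^a \sqrt{\Phi''(s)}\cdot\sqrt{\Phi''(s)}\dd s$, gives for $a\geq b\geq 0$
\[
(\Phi'(a)-\Phi'(b))(a-b) = (a-b)\int_b^a \Phi''(s)\dd s \geq \lt(\int_b^a \sqrt{\Phi''(s)}\dd s\rt)^2,
\]
whose right-hand side is, up to the normalization chosen in \eqref{eq:def_psi}, a multiple of $(\psi(a)-\psi(b))^2$. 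Combined with the Gagliardo representation of $|\cdot|_{H^{\al/2}}$ matched to $c_{d,\al}$, this produces $\intd \Phi'(\rho)\I(\rho) \leq -|\psi(\rho)|_{H^{\al/2}}^2$, proving \eqref{eq:estim0}. The refined bound \eqref{eq:estim} then follows at once by applying the fractional Sobolev inequality $\CS_{d,\al/2}\|\psi(\rho)\|_{L^{\tilde b}}^2 \leq |\psi(\rho)|_{H^{\al/2}}^2$ with $\tilde b = 2d/(d-\al)$.

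\textbf{Main obstacle.} The only delicate point is the Stroock--Varopoulos step together with the bookkeeping on the normalization of $\psi$ and of the Gagliardo expression for $|\cdot|_{H^{\al/2}}$, so that the dissipation truly appears with coefficient $1$ in \eqref{eq:estim0}. Everything else reduces to a routine integration by parts and the two sharp inequalities (HLS and fractional Sobolev) recalled in Section~\ref{sec:results}.
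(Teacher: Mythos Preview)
Your proof is correct and follows essentially the same route as the paper: integration by parts on the drift term followed by $\divg K = (d-\ak)|x|^{-\ak}$ and the Hardy--Littlewood--Sobolev inequality, together with the Stroock--Varopoulos inequality (obtained exactly as you write, via Cauchy--Schwarz on $\int_b^a\sqrt{\Phi''}\cdot\sqrt{\Phi''}$) for the dissipation, then the fractional Sobolev embedding for \eqref{eq:estim}. Your remark about the normalization of $\psi$ is well taken; the paper's definition \eqref{eq:def_psi} carries a factor $\tfrac12$ that is not used in the computed examples nor in the proof, so the intended convention is $\psi(u)=\int_0^u\sqrt{\Phi''}$, with which the coefficient in \eqref{eq:estim0} is exactly $1$.
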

	
	\begin{proof}
		We define the "Carré du Champs" and the $\Phi$-dissipation by
		\begin{align}\label{def:G}
			\G(u,v) &:= \frac{c_{d,\al}}{2}\intd \frac{(u(y)-u(x))(v(y)-v(x))}{|x-y|^{d+\al}} \dd y\dd x
			\\
			\Dp{\Phi}(u) &:= \G(u,\Phi'(u)),
		\end{align}
		where $c_{d,\al}$ is defined in \eqref{def:lapfrac}. With these definitions, we have
		\begin{align*}
			\intd \I(u)v &= \intd u\I(v) = -\intd \G(u,v).
		\end{align*}
		In particular, since $\Phi$ is convex,
		\begin{equation*}
			\intd \I(u)\Phi'(u) = -\intd \Dp{\Phi}(u) \leq 0.
		\end{equation*}
		We remark that
		\begin{align*}
			|\psi(u)-\psi(v)|^2 &= \lt|\int_{u}^{v}\sqrt{\Phi''}\rt|^2
			\\
			&\leq \lt(\int_{u}^{v}\d s\rt)\lt(\int_{u}^{v}\Phi''\rt)
			\\
			&\leq (u-v)(\Phi'(u)-\Phi'(v)),
		\end{align*}
		which by definition \eqref{def:G} leads to
		\begin{align*}
			\G(\psi(u),\psi(u)) \leq \G(u,\Phi'(u)).
		\end{align*}
		Therefore
		\begin{equation}\label{eq:Strook-Varopoulos}
			|\psi(u)|^2_{H^\frac{\al}{2}} = \intd \G(\psi(u),\psi(u)) \leq \intd \G(u,\Phi'(u)) = \intd \Dp{\Phi}(u).
		\end{equation}
		Let $\rho$ be a nonnegative solution to the \eqref{eq:FKS} equation. Then formally
		\begin{align}\nonumber
			\dt \lt(\intd \Phi(\rho)\rt) &= \intd \Phi'(\rho) \I(\rho) - \lambda \Phi''(\rho)\nabla\rho\cdot(K*\rho) \rho
			\\\nonumber
			&= -\intd \Dp{\Phi}(\rho) - \intd \lambda \nabla(\Psi(\rho))\cdot (K*\rho)
			\\\nonumber
			&= -\intd \Dp{\Phi}(\rho) + \lambda \intd \Psi(\rho) (\divg(K) *\rho)
			\\\label{eq:formulation_sans_moments}
			&= -\intd \Dp{\Phi}(\rho) + \lambda\lt(d-\ak\rt) \intd  \lt(\frac{1}{|x|^{\ak}} *\rho\rt).
		\end{align}
		We remark that by Hardy-Littlewood-Sobolev inequality, we have
		\begin{align*}
			\lt(d-\ak\rt)\intd  \lt(\frac{1}{|x|^{\ak}} *\rho\rt) \Psi(\rho) &\leq \lt(d-\ak\rt)\CHLS_{d,\ak,b}\|\rho\|_{L^s}\|\Psi(\rho)\|_{L^b},
		\end{align*}
		and by \eqref{eq:Strook-Varopoulos} and Sobolev embeddings, we have
		\begin{equation*}
			-\intd \Dp{\Phi}(\rho) \leq -|\psi(\rho)|_{H^\frac{\al}{2}}^2 \leq - \CS_{d,\al/2} \|\psi(\rho)\|_{L^{\tilde{b}}}^2,
		\end{equation*}
		which ends the proof.
	\end{proof}
	
	\begin{prop}[$L\ln L$ estimate]\label{LlnLest}
		Let $\ak=\al$ and $\rho$ be a smooth function satisfying the \eqref{eq:FKS} equation with initial condition $\rho^\init \in L\ln L$. Then it holds
		\begin{equation*}
			\intd \rho\, \ln(\rho) +  4C_{\ak,d}^{-1} \lt(\lambda M_0 - C_{\ak,d}\rt) \int_0^t |\sqrt{\rho}|_{H^\frac{\ak}{2}}^2 \leq \intd \rho^\init \ln(\rho^\init),
		\end{equation*}
		with $\rho = \rho(t,\cdot)$ and
		\begin{equation*}
			C_{\ak,d} = \frac{4\lt(\CGNS_{d,\ak/2}\rt)^2}{\lt(d-\ak\rt)\CHLS_{d,\ak,p_{\ak/2}}}.
		\end{equation*}
		Moreover if $\lambda M_0 < C_{\ak,d}$ and for some $T,k>0$, $\rho\in L^\infty((0,T),L^1_k)$, then
		\begin{equation}\label{eq:integrabilite_en_temps}
			\rho \in L^1((0,T),L^{p_\ak}).
		\end{equation}
	\end{prop}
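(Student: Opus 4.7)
The plan is to apply Lemma~\ref{Genest} with the Boltzmann entropy $\Phi(u) = u\ln u$ and then close the resulting differential inequality by trading the Hardy--Littlewood--Sobolev term against the fractional dissipation via a Gagliardo--Nirenberg--Sobolev inequality applied to $\sqrt{\rho}$, whose $L^2$ norm is conserved and equal to $\sqrt{M_0}$.

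Concretely, $\Phi(u) = u\ln u$ gives $\Phi''(u)=1/u$, hence $\Psi(u) = u$ and $\psi(u) = 2\sqrt{u}$ as recorded in the table preceding Lemma~\ref{Genest}. Choosing $b = s = p_{\ak/2} = \frac{2d}{2d-\ak}$ (which satisfies $\frac{1}{s}+\frac{1}{b} = 2-\frac{\ak}{d}$) collapses the nonlinear term in \eqref{eq:estim0} to a multiple of $\|\rho\|_{L^{p_{\ak/2}}}^2$ and yields
\begin{equation*}
	\dt \intd \rho\ln(\rho) + 4\,|\sqrt{\rho}|_{H^{\ak/2}}^2 \leq \lambda(d-\ak)\,\CHLS_{d,\ak,p_{\ak/2}}\,\|\rho\|_{L^{p_{\ak/2}}}^2.
\end{equation*}

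The crucial step is to absorb the right-hand side into the dissipation. Applying the GNS inequality at index $s=\ak/2$ to $f = \sqrt{\rho}$, and noting that the target exponent $r = \frac{2d}{d-\ak/2}$ satisfies $r/2 = p_{\ak/2}$ while $\|\sqrt{\rho}\|_{L^2}^2 = M_0$, one obtains
\begin{equation*}
	\CGNS_{d,\ak/2}\,\|\rho\|_{L^{p_{\ak/2}}} \leq \sqrt{M_0}\,|\sqrt{\rho}|_{H^{\ak/2}}.
\end{equation*}
Squaring, substituting, and recognizing $\lambda(d-\ak)\CHLS_{d,\ak,p_{\ak/2}}/(\CGNS_{d,\ak/2})^2 = 4\lambda/C_{\ak,d}$ by the definition of $C_{\ak,d}$, produces the closed differential inequality
\begin{equation*}
	\dt \intd \rho\ln(\rho) + \tfrac{4}{C_{\ak,d}}\lt(C_{\ak,d}-\lambda M_0\rt)|\sqrt{\rho}|_{H^{\ak/2}}^2 \leq 0,
\end{equation*}
which integrates on $[0,t]$ to the announced entropy--dissipation inequality.

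For the additional assertion \eqref{eq:integrabilite_en_temps}, under $\lambda M_0 < C_{\ak,d}$ the dissipation coefficient is strictly positive. The hypothesis $\rho\in L^\infty((0,T), L^1_k)$ enters through a standard lower bound on the entropy of the form $\intd \rho\ln(\rho) \geq -C\lt(1 + \intd \rho\,\weight{x}^k\rt)$, obtained by comparing $\rho$ with a Gaussian weight via Jensen's inequality and H\"older. This converts the previous inequality into a uniform bound on $\int_0^T |\sqrt{\rho}|_{H^{\ak/2}}^2\dd s$. Combined with the fractional Sobolev embedding applied to $\sqrt{\rho}$, which in the paper's notation reads $\CS_{d,\ak/2}\,\|\rho\|_{L^{p_\ak}} \leq |\sqrt{\rho}|_{H^{\ak/2}}^2$, this yields $\rho\in L^1((0,T),L^{p_\ak})$. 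The main obstacle is essentially bookkeeping: keeping track of the factor of $4$ coming from $\psi$ and of the two sharp constants so that the critical threshold in the dissipation aligns exactly with $C_{\ak,d}$; the only real analytical inputs are the functional inequalities already recorded in Section~\ref{sec:results} together with the elementary entropy lower bound used in the last step.
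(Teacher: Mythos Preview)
Your argument is correct and follows essentially the same route as the paper's proof: apply Lemma~\ref{Genest} with $\Phi_1(u)=u\ln u$ and $b=s=p_{\ak/2}$, then use the Gagliardo--Nirenberg--Sobolev inequality on $\sqrt{\rho}$ (with $\|\sqrt{\rho}\|_{L^2}^2=M_0$) to close the differential inequality, and for \eqref{eq:integrabilite_en_temps} combine the entropy lower bound coming from the relative entropy with respect to $e^{-\lambda_k\weight{x}^k}$ (your ``Gaussian weight'' is in fact $e^{-\lambda_k\weight{x}^k}$, but the argument is identical) with the Sobolev embedding $\CS_{d,\ak/2}\|\rho\|_{L^{p_\ak}}\leq |\sqrt{\rho}|_{H^{\ak/2}}^2$. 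The bookkeeping with the factor of $4$ and the constants is exactly as in the paper.
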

	
	\begin{remark}
		The explicit value for $\CGNS_{d,\ak/2}$ does not seem to be known, however the following lower bound holds
		\begin{equation}\label{eq:GNSlb}
			\CGNS_{d,s} \geq \max\lt(\CS_{d,s/2},(\CS_{d,s})^{1/2}\rt).
		\end{equation}
		Indeed, one way to get the Gagliardo-Nirenberg-Sobolev inequality is to first use Sobolev's inequality and then interpolation between $H^s$ spaces
		\begin{equation*}
			\CS_{d,s/2} \|f\|_{L^r}^2 \leq \lt|f\rt|_{H^\frac{s}{2}}^2 \leq \|f\|_{L^2} |f|_{H^s}.
		\end{equation*}
		Another way is to first interpolate between Lebesgue spaces and then to use Sobolev's inequality
		\begin{align*}
			(\CS_{d,s})^{1/2}\|f\|^2_{L^r} \leq (\CS_{d,s})^{1/2} \|f\|_{L^2} \|f\|_{L^{r_2}} \leq \|f\|_{L^2} \|f\|_{H^s},
		\end{align*}
		where $r_2 := \frac{2d}{d-2s}$.
	\end{remark}

	\begin{proof}
		We use inequality \eqref{eq:estim0} for $\Phi = \Phi_1$, $\psi_1(u) = 2u^{1/2}$ and $b = s = p_{\ak/2}$ to obtain
		\begin{equation*}
			\dt \intd \rho\,\ln(\rho) \leq  \lambda\lt(d-\ak\rt)\CHLS_{d,\ak,b}\lt\|\rho\rt\|^2_{L^b} - \lt|\psi_1(\rho)\rt|_{H^\frac{\ak}{2}}^2.
		\end{equation*}
		Then, by Gagliardo-Nirenberg-Sobolev's inequality, we have
		\begin{align*}
			\lt(\CGNS_{d,\ak/2}\rt)^2 \lt\|\rho\rt\|_{L^b}^2 &= \lt(\CGNS_{d,\ak/2}\rt)^2 \|\rho^{1/2}\|_{L^{2b}}^4
			\\
			&\leq \|\rho^{1/2}\|_{L^2}^2\,|\rho^{1/2}|_{H^\frac{\ak}{2}}^2 = M_0 |\rho^{1/2}|_{H^\frac{\ak}{2}}^2.
		\end{align*}
		Hence, since $\psi_1(u) = 2u^{1/2}$, we have
		\begin{equation*}
			4\lt(\CGNS_{d,\ak/2}\rt)^2 \|\rho\|^2_{L^b} \leq M_0 |\psi_1(\rho)|_{H^\frac{\ak}{2}}^2.
		\end{equation*}
		This yields
		\begin{equation*}
			\dt \intd \rho\, \ln(\rho) \leq C_{\ak,d}^{-1} \lt(\lambda M_0 - C_{\ak,d}\rt) |\psi_1(\rho)|_{H^\frac{\ak}{2}}^2,
		\end{equation*}
		which proves the first assertion. Formula~\eqref{eq:integrabilite_en_temps} comes form the fact for $k>0$, defining $m(x) := \weight{x}^k$ and $\lambda_k>0$ such that $\intd e^{-\lambda_k \weight{x}^k}\d x = 1$, with $h(u)=u\ln u -u+1\geq 0$ it holds
		\begin{align*}
			\intd \frac{\rho}{M_0}\, \ln \frac{\rho}{M_0} &= \intd h\lt(\frac{\rho}{M_0} e^{\lambda_k m}\rt) e^{-\lambda_km} + \intd \frac{\rho}{M_0} \ln (e^{-\lambda_km})\geq -\lambda_k \intd \frac{\rho}{M_0}m,
		\end{align*}
		and then 
		\begin{equation*}
			\intd \rho\ln \rho \geq M_0\,\ln M_0-\lambda_k \intd \rho\, m.
		\end{equation*}
		Combined with the following Sobolev inequality
		\begin{equation*}
			4\,\CS_{d,\ak/2} \|\rho\|_{L^{p_\ak}} = \CS_{d,\ak/2}\|\psi_1(\rho)\|_{L^{2p_\ak}}^2 \leq |\psi_1(\rho)|_{H^\frac{\ak}{2}}^2,
		\end{equation*}
		it yields
		\begin{multline*}
			0 \leq \intd (\rho \ln(\rho) + \lambda_k\lal x\ral^k\rho) - M_0\ln M_0 + 4\CS_{d,\ak/2} C_{\ak,d}^{-1} \lt(C_{\ak,d} -\lambda M_0 \rt) \int_0^t\|\rho\|_{L^{p_\ak}}
			\\
			\leq \intd \rho^\init \ln(\rho^\init) +\lambda_k\|\rho\|_{L^\infty(0,T;L^1_k)}-M_0\ln M_0,
		\end{multline*}
		and the conclusion follows.
	\end{proof}
	
	\begin{prop}[$L^p$ estimates]
		Let $(\al,\ak)\in[0,2)\times [0,d)$. Then, when $\ak < \al$ and $p = q'\in (1,p_\ak)$, we get a gain of integrability from $L^1$ to $L^p$ and a global in time propagation of the $L^p$ norm
		\begin{equation}\label{eq:gain_Lp}
			\|\rho(t)\|_{L^p} \leq C M_0 \max\lt(t^{-\frac{d}{\al q}}, M_0^\frac{d}{q(\al-\ak)}\rt),
		\end{equation}
		where $C >0$ is a constant depending on $d$, $\ak$, $\al$, $p$ and $\lambda$. When $\ak > \al$, then for any $p\in(p_{\ak,\al},p_\ak)$, there exists two constants $C = C_{\ak,\al,p} > 0$ and $C^\init = C_{\ak,\al,p}(\|\rho^\init\|_{L^p})$ such that
		\begin{align}\label{eq:etalement}
			\|\rho^\init\|_{L^p} &< C M_0 (\lambda M_0)^{-\frac{d}{(\ak-\al)q}} \implies \|\rho\|_{L^p} \leq C^\init M_0t^{-\frac{d}{\al q}}
			\\\label{eq:estim_Lp_temps_court}
			\|\rho^\init\|_{L^p} &> C M_0 (\lambda M_0)^{-\frac{d}{(\ak-\al)q}} \implies \rho \in L^\infty((0,T),L^p)
			\\
			\|\rho^\init\|_{L^p} &= C M_0 (\lambda M_0)^{-\frac{d}{(\ak-\al)q}} \implies \rho \in L^\infty(\R_+,L^p),
		\end{align}
		where $T < C_{\ak,\al,p}(\lambda,M_0) \|\rho^\init\|_{L^p}^{-pb}$ with
		\begin{align*}
			b &= \frac{\al}{p\lt(\al-\ak\rt) + d(p-1)}.
		\end{align*}
		When $\ak = \al$, then there exists a constant
		\begin{equation*}
			C_{\ak,d,p} = \frac{4\,\CS_{d,\al/2}}{ (d-\ak) \CHLS_{d,\ak,r}},
		\end{equation*}
		such that for any $p\in(1,p_\ak)$,
		\begin{align}\label{eq:etalement_critique}
			\lambda M_0 &\leq C_{\ak,d,p} \implies \|\rho\|_{L^p} \leq M_0(C^\init b)^{-\frac{1}{b}} t^{-\frac{d}{\al q}}
			\\\label{eq:estim_critique_temps_court}
			\lambda M_0 &\geq C_{\ak,d,p} \implies \rho \in L^\infty((0,T),L^p),
		\end{align}
		where $C^\init$ is a nonnegative constant depending on the initial data and
		\begin{equation*}
			T> \frac{1}{b\,C^\init} \lt(\frac{M_0}{\|\rho^\init\|_{L^p}}\rt)^\frac{\al q}{d}.
		\end{equation*}
	\end{prop}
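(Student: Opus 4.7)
The plan is to apply the general $\Phi$-entropy estimate of Lemma~\ref{Genest} with $\Phi = \Phi_p(u) = u^p/(p-1)$, so that $\Psi_p(u) = u^p$ and $\psi_p(u) = \frac{2}{\sqrt{p}}u^{p/2}$. I choose $b := (p+1)d/(p(2d-\ak))$ and $s := pb$; one checks that $1/s+1/b = 2-\ak/d$ and that $b>1 \iff p < p_\ak$. With these parameters the two right-hand sides consolidate: $\|\rho\|_{L^s}\|\Psi_p(\rho)\|_{L^b} = \|\rho\|_{L^{pb}}^{p+1}$ and $\|\psi_p(\rho)\|_{L^{\tilde b}}^2 = \frac{4}{p}\|\rho\|_{L^{r^*}}^p$ with $r^* := pd/(d-\al)$. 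Multiplying \eqref{eq:estim} by $p-1$ yields the master inequality
\[
\dt \|\rho\|_{L^p}^p \leq C_\lambda \|\rho\|_{L^{pb}}^{p+1} - C_\al \|\rho\|_{L^{r^*}}^p,
\]
with $C_\lambda := (p-1)\lambda(d-\ak)\CHLS_{d,\ak,b}$ and $C_\al := 4(p-1)\CS_{d,\al/2}/p$. A direct computation shows $1 < pb < r^*$ throughout the admissible ranges, so the whole task reduces to interpolating $\|\rho\|_{L^{pb}}$ in order to close this inequality in $y := \|\rho\|_{L^p}^p$.

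In the diffusion-dominated and fair-competition cases ($\ak \leq \al$), I interpolate between $L^1$ and $L^{r^*}$, obtaining $\|\rho\|_{L^{pb}}^{p+1} \leq M_0^{(p+1)\theta}\|\rho\|_{L^{r^*}}^{(p+1)(1-\theta)}$ with $(p+1)(1-\theta) = p\,\frac{(p-1)d+\ak}{(p-1)d+\al}$; this power is strictly less than $p$ when $\ak<\al$ and equals $p$ when $\ak=\al$. For $\ak<\al$, Young's inequality absorbs the $L^{r^*}$-factor into the dissipation, leaving $\dt y \leq -\tfrac{C_\al}{2}\|\rho\|_{L^{r^*}}^p + C_1(M_0,\lambda)$. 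For $\ak=\al$ the powers match exactly, giving $\dt y \leq (C_\lambda M_0 - C_\al)\|\rho\|_{L^{r^*}}^p$ (using that $(p+1)\theta = 1$ in this case); the smallness condition $\lambda M_0 \leq C_{\ak,d,p}$ is precisely what makes the bracket non-positive, and $\CHLS_{d,\ak,b} = \CHLS_{d,\ak,pb} = \CHLS_{d,\ak,r}$ by HLS symmetry with $r = pb = (p+1)d/(2d-\al)$, matching the proposition. In the aggregation-dominated case $\ak>\al$, I interpolate instead between $L^p$ and $L^{r^*}$: $\|\rho\|_{L^{pb}}^{p+1} \leq \|\rho\|_{L^p}^{(p+1)\eta}\|\rho\|_{L^{r^*}}^{(p+1)(1-\eta)}$ with $(p+1)(1-\eta) = (p\ak-(p-1)d)/\al$, which is $< p$ exactly when $p > p_{\ak,\al}$, justifying the proposition's constraint. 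Setting $A := (p+1)\eta$, Young's inequality yields $\dt y \leq -\tfrac{C_\al}{2}\|\rho\|_{L^{r^*}}^p + C_2\, y^{A/(A-1)}$, and one computes $1/(A-1) = b$ exactly as in the proposition.

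To finally close each ODE in $y$ alone, I use the interpolation $\|\rho\|_{L^{r^*}}^p \geq M_0^{-q\al/d}\,y^\sigma$ (from interpolating $\|\rho\|_{L^p}$ between $L^1$ and $L^{r^*}$), with $\sigma := 1 + \al/((p-1)d)$, whose key feature is $1/(\sigma-1) = (p-1)d/\al$, giving the expected exponent $d/(\al q)$. For $\ak<\al$, the ODE $\dt y \leq -c_M y^\sigma + C$ yields both short-time decay $y \lesssim t^{-1/(\sigma-1)}$ and a stationary bound $y_\infty^{\sigma-1} \sim C/c_M$ whose $M_0$ powers unwrap into the announced $M_0^{d/(q(\al-\ak))}$. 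For $\ak=\al$ under the smallness hypothesis, $\dt y \leq -c_M y^\sigma$ integrates directly to the claimed decay rate. For $\ak>\al$, one has $\sigma < A/(A-1) = 1+b$, and the ODE reads $\dt y \leq -c_M y^\sigma + C_2\, y^{1+b}$; if $y(0)$ lies below the threshold $(c_M/C_2)^{1/(1+b-\sigma)}$, which translates to the smallness condition $\|\rho^\init\|_{L^p} < CM_0(\lambda M_0)^{-d/((\ak-\al)q)}$, then the dissipation always dominates and $y$ is globally controlled with the decay rate; otherwise, discarding the dissipation, the pure growth inequality $\dt y \leq C_2 y^{1+b}$ still gives local existence up to $T \sim y(0)^{-b} = \|\rho^\init\|_{L^p}^{-pb}$.

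I expect the critical case $\ak=\al$ to be the main source of difficulty, since the announced threshold $C_{\ak,d,p}$ must emerge from an exact equality (not a Young-type slack) between the HLS and Sobolev constants with the specific exponent $r = (p+1)d/(2d-\al) = pb$; this rigidity forces a perfect alignment between the choice of $b$, the HLS constant and the Sobolev embedding, leaving no freedom for optimization. A secondary technicality is that the whole chain of computations is formal, so passing to weak solutions requires a regularization/approximation step (e.g.\ mollified kernels and initial data) that preserves all of these estimates uniformly; this is standard but needs some care because of the simultaneous singularity of $K$ and non-locality of $\I$.
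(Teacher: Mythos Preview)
Your approach is correct and is essentially the paper's own argument, only organized more directly. The paper starts from the same master inequality (your $\dt\|\rho\|_{L^p}^p \leq C_\lambda\|\rho\|_{L^{pb}}^{p+1} - C_\al\|\rho\|_{L^{r^*}}^p$, which is their $r=pb$, $\tilde r = r^*$), but then performs a single three-way interpolation $\|\rho\|_{L^r}^{p+1} \leq M_0^{\theta_0(p+1)}\|\rho\|_{L^p}^{\theta_1(p+1)}\|\rho\|_{L^{\tilde r}}^{\theta_2(p+1)}$ carrying a free parameter $\eps\in(0,1)$, and afterwards optimizes $\eps$ in each regime. The optimum turns out to be exactly the endpoint that kills one factor: for $\ak<\al$ they take $\eps=\eps_m$, which forces $\theta_1=0$ (your $L^1$--$L^{r^*}$ interpolation), while for $\ak>\al$ they take $\eps=\eps_M$, which forces $\theta_0=0$ (your $L^p$--$L^{r^*}$ interpolation). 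Your two-step scheme (interpolate to absorb into the dissipation, then interpolate again to close in $y$) lands on the optimized inequality immediately, so the final ODEs, exponents $b,b_1=\sigma-1$, and thresholds coincide with theirs. Your identification $\CHLS_{d,\ak,b}=\CHLS_{d,\ak,r}$ via HLS symmetry and the resulting constant $4\CS_{d,\al/2}/(p(d-\ak)\CHLS_{d,\ak,r})$ also matches the paper's proof (and Theorem~\ref{thm:behaviour}); the missing factor $p$ in the displayed $C_{\ak,d,p}$ of the proposition is a typo there, not an error on your side.

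One genuine omission: for $\ak=\al$ with \emph{large} mass you have $\dt y \leq (C_\lambda M_0 - C_\al)\|\rho\|_{L^{r^*}}^p$ with a positive coefficient, and since $r^*>p$ you cannot bound $\|\rho\|_{L^{r^*}}$ from above by $y$ and $M_0$; so \eqref{eq:estim_critique_temps_court} is not yet covered. The fix is immediate: rerun your $\ak>\al$ interpolation ($L^p$--$L^{r^*}$), which at $\ak=\al$ gives $(p+1)(1-\eta)=p-(p-1)d/\al<p$ and, after Young, $\dt y \leq C\,y^{1+b}$ with the same $b=\al/(d(p-1))$; this yields the local bound and the stated lower estimate on $T$. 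The paper obtains the same thing because in their framework both terms already sit at the common power $y^{1+b}$, so the sign of the combined coefficient decides between global decay and local control.

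Your remark about regularization is apt; the paper does exactly this (mollified kernel $K_\eps$), and the a~priori estimates above are uniform in $\eps$.
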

	
	\begin{remark}
		The critical mass is clearly not optimal since we could use optimal constants in the Gagliardo-Nirenberg type embeddings, as it is done in the $L\ln L$ estimate, instead of using Sobolev's embeddings and interpolation between Lebesgue spaces.
	\end{remark}
	
	\begin{proof} We will separate the proof into several steps.
		\step{1. Differential inequality for the $L^p$ norm}
		We recall that
		\begin{equation*}
			\frac{1}{r} = \frac{p}{p+1} \frac{1}{p} + \frac{1}{p+1}\frac{1}{p_a}.
		\end{equation*}
		Since $p<p_a$, it implies that $r\in(p,p_a)$ and in particular $r/p >1$. Therefore, by taking $\Phi = \Phi_p$, $r=s$ and $b=r/p$ in inequality~\eqref{eq:estim} and defining $\tilde{r} =\frac{p\tilde{b}}{2}$, we obtain
		\begin{align}\label{eq:ineq_d_t}
			\dt \lt(\intd \Phi_p(\rho)\rt) &\leq \lambda\, \C_{\ak,r} \|\rho\|_{L^r}^{p+1} - \tfrac{\C_\al}{p} \|\rho\|_{L^{\tilde{r}}}^p,
		\end{align}
		where $\C_{\ak,r} = (d-\ak)\CHLS_{d,\ak,r}$, $\C_\al = 4\CS_{d,\al/2}$ and
		\begin{align}\label{eq:HLS_2}
			\frac{p+1}{r} &= 2-\frac{\ak}{d}
			\\\label{eq:Sobolev_2}
			\frac{p}{\tilde{r}} &= 1-\frac{\al}{d}.
		\end{align}
		We also remark that
		\begin{align*}
			r\leq\tilde{r} &\ssi \tfrac{1}{p}\lt(1-\tfrac{\al}{d} \rt) \leq \tfrac{1}{p+1}\lt(2-\tfrac{\ak}{d} \rt)
			\\
			&\ssi \lt(1+\tfrac{1}{p}\rt)\lt(d-\al\rt) \leq \lt(2d-\ak\rt)
			\\
			&\ssi p \geq \frac{d-\al}{d+\al-\ak}.
		\end{align*}
		Since $p \geq p_{\ak,\al} \geq \frac{d-\al}{d+\al-\ak}$, we deduce that $r\leq \tilde{r}$.
		
		We will now use interpolation between Lebesgue spaces to express the left hand side of \eqref{eq:ineq_d_t} in terms of $M_0$ and the $L^p$ norm only. Let $\eps \in (0,1)$ to be chosen later and
		\begin{align}\label{def:b_0}
			b_0 &:= \frac{\ak-\al(1-\eps)}{\eps d (p-1)} = \frac{\al}{d (p-1)} + \frac{\ak-\al}{\eps d (p-1)}
			\\\label{def:theta_1}
			\theta_1 &:= \frac{\eps p}{p+1}(1+b_0)
			\\\label{def:theta_2}
			\theta_2 &:= \frac{(1-\eps)p}{p+1}
			\\\label{def:theta_0}
			\theta_0 &:= 1 - \theta_1 - \theta_2.
		\end{align}
		Since $p>1$ and $\eps \in (0,1)$, we deduce that $\theta_2 \in [0,1)$.
		Moreover, using the respective definitions \eqref{eq:HLS_2} and \eqref{eq:Sobolev_2} of $r$ and $\tilde{r}$, we have
		\begin{align*}
			\frac{\theta_1}{p} + \frac{\theta_2}{\tilde{r}} + \theta_0 &= \frac{\eps (1+b_0)p}{p+1} \lt(\frac{1}{p}-1\rt) + \frac{(1-\eps)p}{p+1} \lt(\frac{1}{\tilde{r}}-1\rt) + 1
			\\
			&= \frac{1}{p+1} \lt(\eps (1-p)(1+b_0) + (1-\eps) \lt(1-\frac{\al}{d}-p\rt) + p+1\rt)
			\\
			&= \frac{1}{p+1} \lt(\eps (1-p) - \frac{\ak-\al(1-\eps)}{d} + 2 - \frac{\al}{d} -\eps \lt(1-p\rt) + \eps\frac{\al}{d}\rt)
			\\
			&= \frac{1}{p+1} \lt(2 - \frac{\ak}{d}\rt) = \frac{1}{r}.
		\end{align*}
		Therefore, if we can choose $\eps \in(0,1)$ such that $(\theta_0,\theta_1)\in[0,1]^2$, we obtain by interpolation
		\begin{equation*}
			\|\rho\|_{L^r}^{p+1} \leq M_0^{\theta_0(p+1)} \|\rho\|_{L^p}^{p\eps(1+b_0)} \|\rho\|_{L^{\tilde{r}}}^{p(1-\eps)} = A^{\eps}B^{1-\eps}.
		\end{equation*}
		Then, by using the standard Young inequality $a^{\eps}b^{1-\eps} \leq \eps a + (1-\eps) b$, for any $\eps_0>0$, we have
		\begin{equation*}
			A^{\eps}B^{1-\eps} = \lt(\lt(\frac{1-\eps}{\eps_0}\rt)^\frac{1-\eps}{\eps}A\rt)^{\eps} \lt(\frac{\eps_0B}{1-\eps}\rt)^{1-\eps} \leq C_{\eps,\eps_0} A + \eps_0 B,
		\end{equation*}
		with $C_{\eps,\eps_0} = \eps \lt(\frac{1-\eps}{\eps_0}\rt)^\frac{1-\eps}{\eps}$. Coming back to \eqref{eq:ineq_d_t}, it yields
		\begin{align}\label{eq:ineq_d_t_2}
			\dt \lt(\intd \Phi_p(\rho)\rt) &\leq (\lambda \C_{\ak,r})^{1/\eps} C_{\eps,\eps_0} M_0^{\theta_0(p+1)/\eps} \|\rho\|_{L^p}^{p(1+b_0)} + \lt(\eps_0 - \tfrac{\C_\al}{p}\rt) \|\rho\|_{L^{\tilde{r}}}^p,
		\end{align}
		where we take $\eps_0$ smaller than $\C_\al/p$. Since $1\leq p\leq \tilde{r}$, again by interpolation, we get
		\begin{equation*}
			\|\rho\|_{L^p}^{p(1+b_1)} \leq M_0^{p b_1} \|\rho\|_{L^{\tilde{r}}}^p,
		\end{equation*}
		with
		\begin{equation*}
			b_1 = \frac{\al}{d(p-1)}.
		\end{equation*}
		Thus, inequality~\eqref{eq:ineq_d_t_2} becomes
		\begin{align}\label{eq:ineq_d_t_3}
			\dt \|\rho\|_{L^p}^p &\leq C_1 M_0^{\theta_0(p+1)/\eps} \|\rho\|_{L^p}^{p(1+b_0)} - C_2 M_0^{-p b_1} \|\rho\|_{L^p}^{p(1+b_1)},
		\end{align}
		where $C_1 = (p-1)(\lambda \C_{\ak,r})^{1/\eps} C_{\eps,\eps_0}$ and $C_2 = (p-1)\lt(\tfrac{\C_\al}{p} - \eps_0\rt)$.
		\step{2. Conditions on $\eps$}
		We still have to verify that we can choose $\eps$ so that $(\theta_0,\theta_1)\in[0,1]^2$. By definition~\eqref{def:theta_1} of $\theta_1$, we get
		\begin{align*}
			\theta_1 \geq 0 &\,\ssi\, b_0 \geq -1
			\\
			&\,\ssi\, \ak-\al + \al \eps > -\eps d (p-1)
			\\
			&\,\ssi\, \eps \geq \frac{\al - \ak}{\al + d(p-1)} = \eps_m.
		\end{align*}
		Moreover, by definition~\eqref{def:theta_0} of $\theta_0$
		\begin{align*}
			\theta_0 \geq 0 &\,\ssi\, \theta_1 + \theta_2 \leq 1
			\\
			&\,\ssi\, \frac{p}{p+1}\lt(1+\eps b_0\rt) \leq 1
			\\
			&\,\ssi\, \eps b_0 \leq \frac{1}{p}
			\\
			&\,\ssi\, \frac{\ak-\al(1-\eps)}{d(p-1)} \leq \frac{1}{p}
			\\
			&\,\ssi\, \eps \leq 1-\frac{1}{\al}\lt(\ak-\frac{d}{q}\rt)  = \eps_M.
		\end{align*}
		Since $p < p_a$, $\eps_M < 1$. Let us check that it is nonnegative. We have
		\begin{align*}
			\eps_M \geq 0 &\,\ssi\, \ak-\frac{d}{q} \leq \al \,\ssi\, \frac{1}{q} \geq \frac{\ak-\al}{d}.
		\end{align*}
		Since $q=p'\geq1$, this is always verified when $\ak \leq \al$. When $\ak > \al$, it is verified by hypothesis since we can also read previous formula as
		\begin{equation*}
			\eps_M \geq 0 \,\ssi\, p \geq \frac{d}{d+\al-\ak} = p_{\ak,\al}.
		\end{equation*}
		When $\ak < \al$, we also have to verify that $\eps_m \leq \eps_M$. We have, indeed
		\begin{align*}
			\frac{\eps_M}{\eps_m} &= \frac{(p(\al-\ak)+d(p-1))(\al+d(p-1))}{p\al(\al-\ak)}
			\\
			&= \frac{p\al(\al-\ak) + d(p(p-1)(\al-\ak) + \al(p-1) + d(p-1)^2)}{p\al(\al-\ak)}
			\\
			&= 1 + d(p-1)\frac{p(\al-\ak)+\al+d(p-1)}{p\al(\al-\ak)} > 1.
		\end{align*}
		Therefore, since $\theta_2 \geq 0$ and $\theta_0 + \theta_1 + \theta_2 = 1$, we proved that for any $\eps\in[\max(\eps_m,0),\min(\eps_M,1)]$,
		\begin{equation*}
			(\theta_0,\theta_1,\theta_2)\in [0,1]^3.
		\end{equation*}
		By looking at \eqref{eq:ineq_d_t_3}, we want to take $\eps$ which minimizes $b_0$. Hence, we take
		\begin{align*}
			\eps = \eps_m &\text{ when } \ak < \al,
			\\
			\eps = \eps_M &\text{ when } \ak > \al.
		\end{align*}
		\step{3. Case $\ak < \al$}
		In this case, we have $\eps = \eps_m$, which yields $b_0 = -1$. Moreover, since
		\begin{equation*}
			\theta_0(p+1) = p+1 -(1-\eps) p = 1 + \eps p,
		\end{equation*}
		by \eqref{eq:ineq_d_t_3}, we obtain
		\begin{equation*}
			\dt \|\rho\|_{L^p}^p \leq C_1 M_0^{p+1/\eps} - C_2 M_0^{-p b_1} \|\rho\|_{L^p}^{p(1+b_1)}.
		\end{equation*}
		Then, either 
		\begin{equation}\label{eq:gronwall_1}
			C_2 M_0^{-p b_1} \|\rho\|_{L^p}^{p(1+b_1)} \leq 2 C_1 M_0^{p+1/\eps},
		\end{equation}
		or
		\begin{equation}\label{eq:gronwall_2}
			\dt \|\rho\|_{L^p}^p \leq- \tfrac{1}{2}C_2 M_0^{-p b_1} \|\rho\|_{L^p}^{p(1+b_1)}.
		\end{equation}
		Inequality~\eqref{eq:gronwall_1} is equivalent to
		\begin{equation*}
			\|\rho\|_{L^p}^p \leq \lt(\frac{2C_1}{C_2}\rt)^\frac{1}{1+b_1} M_0^{p+\frac{1}{\eps(1+b_1)}} =: C(M_0),
		\end{equation*}
		and by Gronwall's inequality, \eqref{eq:gronwall_2} leads to
		\begin{equation*}
			\|\rho\|_{L^p}^p \leq \lt(\tfrac{1}{2}C_2M_0^{-pb_1}b_1 t\rt)^{-1/b_1} = M_0^p \lt(\tfrac{b_1}{2}C_2t\rt)^{-1/b_1}
		\end{equation*}
		\step{4. Case $\ak > \al$}
		In this case, we have
		\begin{equation*}
			\eps = \eps_M = \frac{p\lt(\al-\ak\rt) + d\lt(p-1\rt)}{\al p} = \frac{1}{p\,b},
		\end{equation*}
		which by definition~\eqref{def:b_0} leads to
		\begin{align*}
			b_0 &= \frac{1}{d (p-1)}\lt(\al + \frac{\ak-\al}{\eps}\rt)
			\\
			&= \frac{\al}{d(p-1)}\lt(\frac{p(\al-\ak) + d(p-1) + p(\ak-\al)}{p(\al-\ak) + d(p-1)}\rt)
			\\
			&= \frac{\al}{p(\al-\ak) + d(p-1)} = b,
		\end{align*}
		and by inequality~\eqref{eq:ineq_d_t_3}, to
		\begin{equation*}
			\dt \|\rho\|_{L^p}^p \leq C_1 M_0^{\theta_0(p+1)/\eps} \|\rho\|_{L^p}^{p(1+b)} - C_2 M_0^{-p b_1} \|\rho\|_{L^p}^{p(1+b_1)}.
		\end{equation*}
		As remarked previously, $\eps = \eps_M \geq 0$. Therefore, since $\ak>\al$,
		\begin{equation}\label{eq:b-b_1}
			b = b_1 + \frac{\ak-\al}{\eps d(p-1)} \geq b_1.
		\end{equation}
		The estimate on the $L^p$ norm is then obtained by analysing the corresponding ODE which is of the form
		\begin{equation*}
			y'(t) = A\, y(t)^{1+b} - B\, y(t)^{1+b_1},
		\end{equation*}
		with $A$ and $B$ nonnegative. It has a fixed point at $y=0$ and at
		\begin{equation*}
			y^\sharp = \lt(\frac{B}{A}\rt)^\frac{1}{b - b_1} \geq 0.
		\end{equation*}
		Therefore, when $y(0) \in [0,y^\sharp)$, it yields $y(t) \in [0,y^\sharp)$ for any $t>0$, and since $y'\leq 0$ in this interval, it implies the existence of a constant $C = C(y(0)^\init) < 1$ such that
		\begin{equation*}
			A\,y^{1+b} \leq C B\, y^{1+b_1}.
		\end{equation*}
		It implies that
		\begin{equation*}
			y' \leq -(1-C)B\, y^{1+b_1},
		\end{equation*}
		which, by Gronwall's inequality, leads to
		\begin{equation*}
			\forall t\in\R_+, y \leq \frac{1}{(y(0)^{-b_1} + b_1 (1-C)Bt)^\frac{1}{b_1}} \leq \frac{M_0^p}{(b_1 (1-C)C_2t)^\frac{1}{b_1}}.
		\end{equation*}
		When $y(0) > y^\sharp$, we can still write that
		\begin{equation*}
			y' \leq A\, y^{1+b}.
		\end{equation*}
		It implies that the solution is bounded in $[0,T]$ for some $T>0$ and
		\begin{align*}
			\forall t\in [0,T], y(t) &\leq \frac{1}{(y(0)^{-b} - b At)^\frac{1}{b}}
			\\
			T &< \frac{1}{b Ay(0)^{b}}.
		\end{align*}
		We deduce the corresponding results for the $L^p$ norm of $\rho$ by Gronwall's inequality. When $y=y^\sharp$, all we get that $y$ is constant and therefore that $\|\rho\|_{L^p}^p \leq y^\sharp$ for any $t>0$. We can compute more precisely 
		\begin{align*}
			y^\sharp = \lt(\frac{C_2 M_0^{-pb_1}}{C_1 M_0^{\theta_0 (p+1)/\eps}}\rt)^\frac{1}{b-b_1} = \lt(\frac{C_2}{C_1}\rt)^\frac{1}{b-b_1} \lt(M_0^{-\theta_0 (p+1)/\eps-pb_1}\rt)^\frac{1}{b-b_1}.
		\end{align*}
		Now by the definitions of $C_1$ and $C_2$ in step $1$, by \eqref{eq:b-b_1} and the definition \eqref{def:theta_0} of $\theta_0$, we have
		\begin{align*}
			\theta_0(p+1) &= 1 - \eps p b = 0
			\\
			\lt(b-b_1\rt)\eps &= \frac{\ak-\al}{d(p-1)}
			\\
			C_1 &= (p-1) \lt(\lambda \C_{\ak,r}\rt)^{1/\eps} C_{\eps,\eps_0}
			\\
			C_2 &= (p-1)\lt(\tfrac{\C_\al}{p} - \eps_0\rt).
		\end{align*}
		This leads to
		\begin{align*}
			y^\sharp &= \lt(\frac{\C_\al-\eps_0p}{(\lambda \C_{\ak,r})^{1/\eps}C_{\eps,\eps_0}p}\rt)^\frac{1}{b-b_1} \lt(M_0^{p(b-b_1)-1/\eps}\rt)^\frac{1}{b-b_1}
			\\
			&= C_{\ak,\al,p}^p M_0^{p-\frac{d(p-1)}{\ak-\al}} \lambda^\frac{d(p-1)}{\ak-\al}.
		\end{align*}
		
		\step{5. Case $\ak=\al$}
		When $\ak = \al$, by definition~\eqref{def:b_0}, $b_0$ does not depend on $\eps$ and
		\begin{align*}
			b = b_0 = b_1 = \frac{\al}{d(p-1)}
			\\
			\theta_0 (p+1) = 1 - \eps p b.
		\end{align*}
		Moreover, we can take any $\eps\in (\eps_m,\eps_M] = (0,d/(\al q)]$. Thus, by inequality~\eqref{eq:ineq_d_t_3}, we get
		\begin{align*}
			\dt \|\rho\|_{L^p}^p &\leq C_1 M_0^{(1 - \eps p b)/\eps}\|\rho\|_{L^p}^{p(1+b)}  - C_2 M_0^{-p b}\|\rho\|_{L^p}^{p(1+b)}
			\\
			&\leq \|\rho\|_{L^p}^{p(1+b)} M_0^{-pb}\lt(C_1 M_0^{1/\eps}  - C_2\rt).
		\end{align*}
		The left hand side will be negative when
		\begin{equation}\label{eq:condition_masse}
			M_0 \leq \lt(\frac{C_2}{C_1}\rt)^{\eps} = \frac{\lt(\C_\al/p-\eps_0\rt)^\eps}{\lambda \C_{\ak,r}} \lt(\frac{\eps_0}{1-\eps}\rt)^{1-\eps}\eps^{-\eps} = u_\eps(\eps_0).
		\end{equation}
		Taking $\eps_0$ maximizing the right hand side, we get
		\begin{align*}
			\eps_0 &= (1-\eps)\C_\al / p
			\\
			u_\eps(\eps_0) &= \frac{\C_\al}{p\lambda \C_{\ak,r}} = \frac{C_{\ak,d,p}}{\lambda}.
		\end{align*}
		When this is the case, then $C^\init := |C_1 M_0^{1/\eps}  - C_2| > 0$ and by Gronwall's inequality
		\begin{equation*}
			\forall t\in\R_+, \|\rho\|_{L^p}^p \leq \frac{1}{(\|\rho^\init\|_{L^p}^{-pb} + b M_0^{-pb}C^\init t)^\frac{1}{b}} \leq \frac{M_0^p}{(bC^\init t)^\frac{1}{b}},
		\end{equation*}
		which proves \eqref{eq:etalement_critique}. When $M_0 > M_0^*$ we only get the existence of $T>0$ such that
		\begin{equation*}
			\forall t\in[0,T], \|\rho\|_{L^p}^p \leq \frac{1}{(\|\rho^\init\|_{L^p}^{-pb} - b M_0^{-pb}C^\init t)^\frac{1}{b}}.
		\end{equation*}
		Moreover, $T$ verifies
		\begin{equation*}
			T > \frac{1}{bC^\init} \lt(\frac{M_0}{\|\rho^\init\|_{L^p}}\rt)^{pb},
		\end{equation*}
		which proves \eqref{eq:estim_critique_temps_court}.
	\end{proof}
	
	\begin{cor}\label{cor:estim_Lp_tx}
		When $\ak < \al$ and $\rho^\init \in L^1$, then for any $p<p_\al$
		\begin{equation}\label{eq:estim_L1_tx_sscritique}
			\rho \in L^1_\mathrm{loc}(\R_+,L^p),
		\end{equation}
		which holds in particular if $p = p_\ak$. When $\ak > \al$ and $\rho^\init \in L^p$ for a given $p\in (p_{\ak,\al},p_a)$, then there exists $T>0$ such that
		\begin{equation}\label{eq:estim_Lp_tx_surcritique}
			\rho \in L^p((0,T),L^{\tilde{r}}),
		\end{equation}
		where $\tilde{r}= pp_\al \geq p_\ak$. Moreover, if \eqref{eq:etalement} is verified,
		\begin{equation*}
			\rho \in L^p_\mathrm{loc}(\R_+,L^{\tilde{r}}).
		\end{equation*}
	\end{cor}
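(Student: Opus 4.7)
The plan is to exploit the raw form of the differential inequality~\eqref{eq:estim} with $\Phi = \Phi_p$, namely
\begin{equation*}
    \frac{1}{p-1}\dt\|\rho\|_{L^p}^p + \frac{4\,\CS_{d,\al/2}}{p}\|\rho\|_{L^{\tilde r}}^p \leq \lambda(d-\ak)\,\CHLS_{d,\ak,r}\|\rho\|_{L^r}^{p+1},
\end{equation*}
with $\tilde r = p\,p_\al$, keeping the dissipation on the left rather than absorbing it entirely into an ODE for $\|\rho\|_{L^p}$ as in the proof of the preceding proposition. The cross term $\|\rho\|_{L^r}^{p+1}$ is handled by the same tri-linear interpolation
\begin{equation*}
    \|\rho\|_{L^r}^{p+1} \leq M_0^{\theta_0(p+1)}\|\rho\|_{L^p}^{p\eps(1+b_0)}\|\rho\|_{L^{\tilde r}}^{p(1-\eps)}
\end{equation*}
followed by Young's inequality with a sufficiently small parameter $\eps_0$, which absorbs a fraction of the $L^{\tilde r}$ dissipation into the left-hand side and leaves a right-hand side of the form $C\,M_0^{\theta_0(p+1)/\eps}\|\rho\|_{L^p}^{p(1+b_0)}$. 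After integrating between $\tau \geq 0$ and $T$ this produces
\begin{equation*}
    c\int_\tau^T \|\rho\|_{L^{\tilde r}}^p \dd t \leq \|\rho(\tau)\|_{L^p}^p + C\int_\tau^T M_0^{\theta_0(p+1)/\eps}\|\rho\|_{L^p}^{p(1+b_0)}\dd t.
\end{equation*}

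In the aggregation-dominated case $\ak > \al$, I take $\eps = \eps_M$, so that $b_0 = b > 0$ and $\theta_0 = 0$ as in step~4 of the previous proof. The previous proposition already provides $\rho \in L^\infty([0,T), L^p)$ for some $T > 0$, and since $\rho^\init \in L^p$ the boundary term at $\tau = 0$ is finite; the right-hand side is then controlled and $\rho \in L^p((0,T), L^{\tilde r})$ follows directly. Under~\eqref{eq:etalement}, the proposition upgrades the $L^p$ bound to global, and the same argument on arbitrary intervals yields $\rho \in L^p_\mathrm{loc}(\R_+, L^{\tilde r})$.

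In the diffusion-dominated case $\ak < \al$, I take $\eps = \eps_m$, which gives $b_0 = -1$; the $\|\rho\|_{L^p}$-factor in the right-hand side vanishes and the cross integral collapses to a constant multiple of $T - \tau$. Since $\rho^\init$ is only in $L^1$, I start the integration at $\tau > 0$ and bound $\|\rho(\tau)\|_{L^p}$ via the gain of integrability~\eqref{eq:gain_Lp}, obtaining $\rho \in L^p_\mathrm{loc}((0,\infty), L^{p\,p_\al})$ for every $p \in (1, p_\ak)$. To reach the target $\rho \in L^1_\mathrm{loc}(\R_+, L^P)$ for an arbitrary $P < p_\al$, I interpolate pointwise in time between mass conservation and this dissipation bound: choosing $p > 1$ so that $p\,p_\al > P$ and $\tfrac{1}{P} = \theta + \tfrac{1-\theta}{p\,p_\al}$, the pointwise inequality $\|\rho\|_{L^P} \leq M_0^\theta\|\rho\|_{L^{p\,p_\al}}^{1-\theta}$ raised to the power $p/(1-\theta) \geq 1$ and integrated in time yields $\rho \in L^{p/(1-\theta)}_\mathrm{loc} \subset L^1_\mathrm{loc}$ on any compact subinterval of $(0,\infty)$. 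Integrability up to $t = 0$ is then obtained directly from~\eqref{eq:gain_Lp}, since for any $p_1 \in (1, p_\ak)$ the bound $\|\rho(t)\|_{L^{p_1}} \leq C t^{-d/(\al q_1)}$ is Lebesgue integrable at the origin precisely because $p_1 < p_\ak < p_\al$ forces $d/(\al q_1) < 1$, and a H\"older interpolation against the $L^{p\,p_\al}$ dissipation control extends this integrability to all $P < p_\al$.

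The main difficulty is the algebraic management of the parameters $(\eps, \eps_0)$ so that the right-hand side depends only on $M_0$ in the subcritical regime and only on the already-bounded $\|\rho\|_{L^p}$ in the supercritical one, together with the handling of the rough $L^1$ initial data in the diffusion-dominated case, which forces the detour through the gain of integrability.
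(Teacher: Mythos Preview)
Your treatment of the aggregation-dominated case $\ak>\al$ is exactly the paper's: integrate inequality~\eqref{eq:ineq_d_t_2} in time, keep the $L^{\tilde r}$ dissipation on the left, and bound the right-hand side using the $L^\infty_t L^p$ control supplied by~\eqref{eq:estim_Lp_temps_court} (locally in time) or~\eqref{eq:etalement} (globally).

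In the diffusion-dominated case $\ak<\al$, however, you take a substantially longer route than the paper. The paper's proof is one line: the pointwise bound~\eqref{eq:gain_Lp} already gives $\|\rho(t)\|_{L^p}\le C\max(t^{-d/(\al q)},\,C')$, and the observation that $p<p_\al$ is exactly $\tfrac{d}{\al q}<1$ makes $t\mapsto\|\rho(t)\|_{L^p}$ locally integrable on $\R_+$, including at $t=0$. There is no need to pass through the dissipation term, start the integration at $\tau>0$, or interpolate back. Your detour via $\rho\in L^p_{\mathrm{loc}}((0,\infty),L^{pp_\al})$ does have the virtue of reaching Lebesgue exponents beyond $p_\ak$ through $\tilde r = pp_\al$, but your final step — ``a H\"older interpolation against the $L^{pp_\al}$ dissipation control extends this integrability to all $P<p_\al$'' near $t=0$ — is not clearly justified as written, since the dissipation bound $\int_\tau^T\|\rho\|_{L^{\tilde r}}^p\le\|\rho(\tau)\|_{L^p}^p+C(T-\tau)$ itself diverges as $\tau\to 0$. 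For the range the paper actually needs (in particular $p=p_\ak$, as emphasized in the statement), the direct integration of~\eqref{eq:gain_Lp} is both simpler and sufficient.
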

	
	\begin{proof}
		Equation~\eqref{eq:estim_L1_tx_sscritique} comes from inequality~\eqref{eq:gain_Lp} by remarking that $p<p_\al$ implies $d/(\al q) \leq 1$ and integrating in time. Equation~\eqref{eq:estim_Lp_tx_surcritique} is a consequence of \eqref{eq:ineq_d_t_2}, which by integrating in time leads to
		\begin{equation*}
			\|\rho\|_{L^p}^p(t) + C_2 \int_0^t \|\rho(s)\|_{L^{\tilde{r}}}^p \dd s \leq \|\rho^\init\|_{L^p}^p + C_1 M_0^{\theta_0(p+1)/\eps} \int_0^t\|\rho(s)\|_{L^p}^{p(1+b_0)}\dd s.
		\end{equation*}
		If $\rho\in L^\infty([0,T],L^p)$, then we deduce that
		\begin{equation*}
			C_2 \int_0^t \|\rho(s)\|_{L^{\tilde{r}}}^p \dd s \leq \|\rho^\init\|_{L^p}^p + C_1 M_0^{\theta_0(p+1)/\eps} T \|\rho\|_{L^\infty([0,T],L^p)}^{p(1+b_0)},
		\end{equation*}
		and we conclude by using \eqref{eq:etalement} or \eqref{eq:estim_Lp_temps_court}.
	\end{proof}

\subsection{Tightness and coupling.}
	For the rest of the section we consider some given stochastic basis $\lt(\Omega, \mathcal{F},(\mathcal{F}_t)_{t\geq 0}, \mathbb{P}\rt)$. The expectation with respect to $\mathbb{P}$ will be denoted $\E$. We first provide a generalization of \cite[Proposition~3.1]{carrillo_propagation_2018} in the
	
	\begin{lem}\label{lem:loep}
		Let be $\ak\leq d$, $k\geq 1$ and $p \geq p_\ak$. There exists a constant $C$ depending only on $d,p,\ak$  such that for any $\rho_1,\rho_2\in \PP_k\cap L^p$ and $X,\bar{X}$ two i.i.d. random variables of law $\rho_1$ (respectively $Y,\bar{Y}$ two i.i.d. of law $\rho_2$), it holds when $p>p_\ak$
		\begin{equation}\label{eq:estim_d_wk_lineaire} \tag{i}
			\E\lt[ |X-Y|^{k-1} \lt| K(X-\bar{X}) - K(Y-\bar{Y}) \rt| \rt] \leq C C_{\rho_1,\rho_2} \Eps_k,
		\end{equation}
		and when $p = p_\ak$,
		\begin{equation}\label{eq:estim_d_wk_log}\tag{ii}
			\E\lt[ |X-Y|^{k-1} \lt| K(X-\bar{X}) - K(Y-\bar{Y}) \rt| \rt]
			\leq C C_{\rho_1,\rho_2} \Eps_k \lt( 1 + \frac{\ln_-(\Eps_k) }{k} \rt),
		\end{equation}
		where $C_{\rho_1,\rho_2} = 1+\|\rho_1\|_{L^p} + \|\rho_2\|_{L^p}$ and $\Eps_k = \E\lt|[X-Y|^k\rt]$.
	\end{lem}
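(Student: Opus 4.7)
The plan rests on three ingredients: a Loeper-type pointwise bound on the increment of $K$, an independence-based reduction to a Riesz potential, and Hardy-Littlewood-Sobolev-type control of that potential. I would work under the natural assumption that $(X,Y)$ and $(\bar X, \bar Y)$ are independent copies from a common coupling $\pi$ of $(\rho_1, \rho_2)$, so that $X, \bar X$ are i.i.d.\ $\rho_1$, $Y, \bar Y$ are i.i.d.\ $\rho_2$, and $\rho_1,\rho_2$ being probability measures fixes their total mass at $1$.

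First, for $K(x)=x/|x|^\ak$ with $\ak\in(0,d)$, I would establish
\begin{equation*}
|K(a)-K(b)|\leq C|a-b|\lt(|a|^{-\ak}+|b|^{-\ak}\rt),\qquad a,b\neq 0,
\end{equation*}
by splitting according to whether $|a-b|\leq \tfrac{1}{2}\min(|a|,|b|)$. In the first regime the segment $[a,b]$ avoids the origin and the mean value theorem combined with $|\nabla K(x)|\leq C|x|^{-\ak}$ gives the bound; in the second, $|a|,|b|\lesssim|a-b|$, so $|K(a)|+|K(b)|=|a|^{1-\ak}+|b|^{1-\ak}$ fits the right-hand side after writing $|a|^{1-\ak}=|a|\cdot|a|^{-\ak}$. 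Applying this with $a=X-\bar X$ and $b=Y-\bar Y$, using $|a-b|\leq|X-Y|+|\bar X-\bar Y|$, multiplying by $|X-Y|^{k-1}$ and expanding produces four terms. The cross-factor $|X-Y|^{k-1}|\bar X-\bar Y|$ is handled by Young's inequality $|X-Y|^{k-1}|\bar X-\bar Y|\leq\tfrac{k-1}{k}|X-Y|^k+\tfrac{1}{k}|\bar X-\bar Y|^k$; since $(\bar X,\bar Y)$ has the same joint law as $(X,Y)$, both contributions equal $\Eps_k$. Conditioning on the variable with law $\rho_i$ identifies $\E\lt[|Z-\bar Z|^{-\ak}\mid Z\rt]=(|\cdot|^{-\ak}\ast\rho_i)(Z)=:\phi_i(Z)$, and the chain of estimates collapses to
\begin{equation*}
\E\lt[|X-Y|^{k-1}\lt|K(X-\bar X)-K(Y-\bar Y)\rt|\rt]\leq C\lt(\|\phi_1\|_\infty+\|\phi_2\|_\infty\rt)\Eps_k.
\end{equation*}

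It remains to bound $\|\phi_i\|_\infty$. For case (i), $p>p_\ak$, the split $\phi_i(x)\leq\int_{|x-z|<R}|x-z|^{-\ak}\rho_i(z)\,dz+R^{-\ak}$, together with Hölder's inequality on the inner ball (integrable since $\ak p'<d$) and optimization over $R$, yields $\|\phi_i\|_\infty\leq C\|\rho_i\|_{L^p}^{\theta}$ for some $\theta\in(0,1)$; the elementary inequality $x^\theta\leq 1+x$ then gives the announced $C(1+\|\rho_i\|_{L^p})$ and proves (i). The main obstacle is the endpoint (ii), $p=p_\ak$, where the inner Hölder integral becomes $\int_{|x-z|<R}|x-z|^{-d}\,dz$ and diverges logarithmically, so that $\phi_i$ need not be in $L^\infty$. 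I would avoid a uniform bound and truncate directly inside the expectation, writing $|X-\bar X|^{-\ak}\leq r^{-\ak}+|X-\bar X|^{-\ak}\indic_{|X-\bar X|\leq r}$: the first piece contributes $r^{-\ak}\Eps_k$, while the singular piece, with a further split at a fixed large scale $R$ and Hölder in the $\bar X$-integration, picks up a logarithmic factor of the form $(\log(R/r))^{1/p_\ak'}\|\rho_i\|_{L^{p_\ak}}$. Choosing $r$ comparable to $\Eps_k^{1/k}$, the natural coupling scale, then balances the contributions and produces the announced correction $1+\ln_-(\Eps_k)/k$; this endpoint computation is where I expect the main difficulty.
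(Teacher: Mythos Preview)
Your argument for (i) is correct and is essentially the paper's approach. The gap is in (ii). Once you commit to the global Loeper bound $|K(a)-K(b)|\le C|a-b|\lt(|a|^{-\ak}+|b|^{-\ak}\rt)$, you are left with controlling terms of the type $\E\lt[|X-Y|^k|X-\bar X|^{-\ak}\rt]$. Your truncation $|X-\bar X|^{-\ak}\le r^{-\ak}+|X-\bar X|^{-\ak}\indic_{|X-\bar X|\le r}$ does not rescue this: conditioning on $X$ and applying H\"older with exponent $p_\ak$ to the singular piece gives
\[
\|\rho_1\|_{L^{p_\ak}}\lt(\int_{|z|\le r}|z|^{-\ak p_\ak'}\dd z\rt)^{1/p_\ak'},
\]
and since $\ak p_\ak'=d$ this integral diverges at the origin for every $r>0$. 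A further split at a large scale $R$ cannot help, because the blow-up is at $0$, not at infinity; the $\log(R/r)$ you expect only arises from an annulus $r\le|z|\le R$, not from $|z|\le r$. Independently, the non-singular piece $r^{-\ak}\Eps_k$ already fails to balance: with $r=\Eps_k^{1/k}$ it yields $\Eps_k^{1-\ak/k}$, which is strictly worse than $\Eps_k\lt(1+\ln_-(\Eps_k)/k\rt)$ as $\Eps_k\to 0$.

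The missing idea, which the paper implements, is to abandon the gradient bound in the inner region and use the trivial estimate $|K(a)-K(b)|\le |K(a)|+|K(b)|$ when $|a|\wedge|b|\le r$. This replaces $|X-\bar X|^{-\ak}$ by $|X-\bar X|^{1-\ak}$, whose local H\"older integral $\int_{|z|\le r}|z|^{-(\ak-1)p_\ak'}\dd z$ now converges (since $(\ak-1)d/\ak<d$) and contributes a factor of order $r$ multiplied by $\Eps_k^{(k-1)/k}$. The gradient bound is kept only on the outer region $|a|\wedge|b|>r$, where the corresponding H\"older integral $\int_{r\le|z|\le 1}|z|^{-d}\dd z$ produces the logarithm $\ln_-(r)$ multiplied by $\Eps_k$. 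With this two-regime splitting the choice $r=\Eps_k^{1/k}$ balances correctly and gives $C\,C_{\rho_1,\rho_2}\,\Eps_k\lt(1+\ln_-(\Eps_k)/k\rt)$.
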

	
	\begin{remark}
		The point $(i)$ of this Lemma has been extensively used in the literature (See for instance \cite{carrillo_local_2014, carrillo_derivation_2014, godinho_propagation_2015, salem_propagation_2017}). So has the point $(ii)$ in the Newtonian case $\ak=d$ and thus $p_\ak=\infty$ (see for instance \cite{loeper_uniqueness_2006, hauray_wasserstein_2009, carrillo_propagation_2018}). Since we did not found its generalization to a general Riesz interaction kernel $\ak\in(0,d)$, we provide more detail. A similar technique can be found in~\cite{lafleche_propagation_2019}.
	\end{remark}
	
	\begin{proof}
		We start with the classical inequality (see \cite[(3.9)]{hauray_wasserstein_2009}, \cite[Lemma 2.5]{godinho_propagation_2015}, \cite[(3.26)]{carrillo_derivation_2014}, \cite[(3.5)]{carrillo_local_2014}) which holds for any $(x,y)\in (\R^d)^2$
		\begin{equation*}
			\lt|K(x)-K(y)\rt| \leq \lt(\lt|\nabla K(x)\rt| + \lt|\nabla K(y)\rt|\rt) |x-y|.
		\end{equation*}
		Then denote $\pi=\L(X,Y)=\L(\bar{X},\bar{Y})\in \PP(\R^{2d})$.
		\step{1. Proof of \eqref{eq:estim_d_wk_lineaire}} We assume here that $p>p_\ak$. Then we have
		\begin{align*}
			\E&\lt[ |X-Y|^{k-1} \lt| K(X-\bar{X}) - K(Y-\bar{Y}) \rt|  \rt]
			\\
			&\quad\leq \E\lt[ |X-Y|^{k-1} (\lt|X-Y\rt|+\lt|\bar{X}-\bar{Y}\rt|) \lt( 	|\nabla K(X-\bar{X})|+|\nabla K(Y-\bar{Y})| \rt) \rt]
			\\
			&\quad :=\mathcal{I}_1+\mathcal{I}_2.
		\end{align*}
		We first estimate $\mathcal{I}_1$. Since $X$ and $\bar{X}$ are independent we get
		\begin{align*}
			\mathcal{I}_1 &= \E\lt[ |X-Y|^{k} \E\lt[ \lt( |\nabla K(X-\bar{X})|+|\nabla K(Y-\bar{Y})| \rt)| X,Y\rt] \rt]
			\\
			&= \E\lt[ |X-Y|^{k} \lt(\iintd  \lt( |\nabla K(X-x)|+|\nabla K(Y-y)| \rt)\pi(\d x,\d y) \rt) \rt]
			\\
			&= \E\lt[ |X-Y|^{k} \lt(\intd |\nabla K(X-x)|\rho_1(x)\dd x + \intd |\nabla K(Y-y)|\rho_2(y)\dd y \rt) \rt].
		\end{align*}
		But since $|\nabla K| \leq C_\ak |x|^{-\ak}$ with $C_\ak = \max(1-\ak,\ak)$, we obtain
		\begin{align*}
			C_{\ak}^{-1}\intd |\nabla K(X-x)|\rho_1(x)\dd x &\leq \intd|X-x|^{-\ak}\rho_1(x)\dd x
			\\
			&\leq \int_{|X-x|\leq r} |X-x|^{-\ak}\rho_1(x)\dd x + r^{-\ak}\|\rho_1\|_{L^1}
			\\
			&\leq \| \rho_1 \|_{L^p}\lt( \int_{|x|<r} |x|^{-\ak q}\dd x \rt)^{1/q} + r^{-\ak}\|\rho_1\|_{L^1},
		\end{align*}
		where $q=p'$ and $r>0$. Since $p>p_a$, we get $aq < d$ so that $|x|^{-aq}$ is locally integrable and we obtain
		\begin{align*}
			C_{\ak}^{-1}\intd |\nabla K(X-x)|\rho_1(x)\dd x &\leq C_K r^{d/q} \|\rho_1 \|_{L^p} + r^{-\ak}\|\rho_1\|_{L^1},
		\end{align*}
		where $C_K = \lt(\frac{\omega_d}{d-\ak q}\rt)^{1/q}$. 
						
		\step{2. Proof of \eqref{eq:estim_d_wk_log}} Note that for any $(x,y)\in (\R^d)^2$ and $r>0$, it holds
		\begin{equation*}
			\lt|K(x)-K(y)\rt| \leq
			\begin{cases}
			|K(x)|+|K(y)| & \text{if } |x|\wedge|y| \leq r
			\\ 
			\lt(\lt|\nabla K(x)\rt|+ \lt|\nabla K(y)\rt|\rt) |x-y| & \text{else}.
			\end{cases}
		\end{equation*}
		So that 
		\begin{align*}
			\E&\lt[ |X-Y|^{k-1} \lt| K(X-\bar{X}) - K(Y-\bar{Y}) \rt|  \rt] \leq
			\\
			&\quad \E\lt[ |X-Y|^{k-1} \lt( 	|K(X-\bar{X})|+|K(Y-\bar{Y})| \rt) \indic_{|X-\bar{X}|\wedge |Y-\bar{Y}|  \leq r} \rt]
			\\
			&\quad+ \E[ \lt|X-Y\rt|^{k-1} (\lt|X-Y\rt|+\lt|\bar{X}-\bar{Y}\rt|)
			\\
			&\qquad\qquad\qquad\lt( |\nabla K(X-\bar{X})|+|\nabla K(Y-\bar{Y})| \rt) \indic_{|X-\bar{X}|\wedge |Y-\bar{Y}| > r} ]
			\\
			&\quad =: \mathcal{I}_1+\mathcal{I}_2.
		\end{align*}
		To estimate $\mathcal{I}_1$, we write
		\begin{align*}
			\mathcal{I}_1 &= I^1_1+I^2_1+I^3_1
			\\
			&:= \E\lt[ |X-Y|^{k-1} \lt( 	|K(X-\bar{X})|+|K(Y-\bar{Y})| \rt)\indic_{|X-\bar{X}|\vee |Y-\bar{Y}|  \leq r}  \rt]
			\\
			&\quad + \E\lt[ |X-Y|^{k-1} \lt( 	|K(X-\bar{X})|+|K(Y-\bar{Y})| \rt)\indic_{|X-\overline{X}|> r \geq |Y-\overline{Y}|}  \rt]
			\\
			&\quad + \E\lt[ |X-Y|^{k-1} \lt( 	|K(X-\bar{X})|+|K(Y-\bar{Y})| \rt)\indic_{|X-\overline{X}| \leq r < |Y-\overline{Y}|}\rt].
		\end{align*}
		Then, for the estimate of $I_1^1$, we get by independence of $\bar{X}$ and $X$ (respectively $\bar{Y}$ and $Y$)
		\begin{align*}
			I_1^1&=\E\lt[ \E\lt[ \lt(	|K(X-\bar{X})|+|K(Y-\bar{Y})|\rt)\indic_{|X-\overline{X}|\vee |Y-\overline{Y}| \leq r} | X,Y \rt]  |X-Y|^{k-1}   \rt]
			\\
			&\leq \E\lt[\lt(\int_{|X-x|\leq r} \frac{\rho_1(x)}{|X-x|^{\ak-1}}\dd x+\int_{|Y-y|\leq r} \frac{\rho_2(y)}{|Y-y|^{\ak-1}}\dd y\rt) |X-Y|^{k-1}\rt]
			\\
			&\leq \lt( \|\rho_1\|_{L^{p_\ak} }+ \|\rho_2\|_{L^{p_\ak}}\rt) \lt( \int_{|z|\leq r} |z|^{-(\ak-1)\frac{d}{\ak}} dz\rt)^{\frac{\ak}{d}}   \E \bigl[ |X-Y|^{k-1}\bigr].
		\end{align*}
		Since
		\begin{equation*}
			\int_{|z|\leq r} |z|^{-(\ak-1)\frac{d}{\ak}} dz = \omega_d \int_{0}^r u^{d-1-(\ak-1)\frac{d}{\ak}}\dd s = \frac{\ak\omega_d}{d} r^\frac{d}{\ak} =: (C'_{d,\ak} r)^\frac{d}{\ak},
		\end{equation*}
		we get
		\begin{align*}
			I_1^1 &\leq C'_{d,\ak} r \lt(\|\rho_1\|_{L^{p_\ak}} + \|\rho_2\|_{L^{p_\ak}}\rt) \E\lt[|X-Y|^{k-1}\rt].
		\end{align*}
		For $I_1^2$, we have
		\begin{align*}
			I_1^2 &\leq \E\lt[\frac{2}{|Y-\overline{Y}|^{\ak-1}}\indic_{|Y-\overline{Y}|\leq r} |X-Y|^{k-1}\rt]
			\\
			&= 2\E\lt[\lt(\int_{|Y-y|\leq r} \frac{\rho_2(y)}{|Y-y|^{\ak-1}}\dd y\rt) |X-Y|^{k-1} \rt]
			\\
			&\leq 2C'_{d,\ak} r \lt\|\rho_2\rt\|_{L^{p_\ak}} \E\lt[|X-Y|^{k-1} \rt].
		\end{align*}
		We then estimate $I_1^3$ similarly. Combining the above estimates, we obtain
		\begin{align*}
			\mathcal{I}_1 &\leq 3C'_{d,\ak} r \lt( \|\rho_1\|_{L^{p_\ak}} + \|\rho_2\|_{L^{p_\ak}}\rt) \E\lt[ |X-Y|^{k-1} \rt]
			\\
			& \leq 3C'_{d,\ak} r \lt( \|\rho_1\|_{L^{p_\ak}} + \|\rho_2\|_{L^{p_\ak}}\rt)\E\lt[ |X-Y|^k \rt]^{(k-1)/k}.
		\end{align*}
		Next, we estimate $\mathcal{I}_2$ by writing
		\begin{align*}
			\mathcal{I}_2 &= C_a (I_2^1+I_2^2)
			\\
			&:= C_a \E\lt[|X-Y|^k \lt(\frac{1}{|X-\overline{X}|^\ak} + \frac{1}{|Y-\overline{Y}|^\ak}\rt) \indic_{|X-\overline{X}|\wedge|Y-\overline{Y}|> r}\rt]
			\\
			&\quad + C_a \E\lt[|X-Y|^{k-1}|\overline{X}-\overline{Y}|\lt(\frac{1}{|X-\overline{X}|^\ak}+\frac{1}{|Y-\overline{Y}|^\ak}\rt)\indic_{|X-\overline{X}|\wedge|Y-\overline{Y}|> r} \rt].
		\end{align*}
		First we easily obtain since $\indic_{a\wedge b \geq r}=\indic_{a\geq r}\indic_{b\geq r}$
		\begin{align*}
			I_2^1 &= \E\lt[ |X-Y|^k \E\lt[  \lt(\frac{1}{|X-\overline{X}|^\ak}+\frac{1}{|Y-\overline{Y}|^\ak}\rt) \indic_{|X-\overline{X}|\wedge|Y-\overline{Y}|> r} |X,Y \rt] \rt]
			\\
			&\leq \E\lt[|X-Y|^k \lt(\int_{|X-x|\geq r}\frac{\rho_1(x)}{|X-x|^\ak}\dd x + \int_{|Y-y|\geq r}\frac{\rho_2(y)}{|Y-y|^\ak}\dd y\rt)  \rt].
		\end{align*}
		We then consider two cases: $r > 1$ and $0 < r \leq 1$. For $r \leq 1$, we get
		\begin{align*}
			\int_{|X-x|\geq r}\frac{\rho_1(x)}{|X-x|^\ak}\dd x &= \int_{|X-x|> 1} \frac{\rho_1(x)}{|X-x|^\ak}\dd x + \int_{|X-x| \in [r,1]} \frac{\rho_1(x)}{|X-x|^\ak}\dd x
			\\
			&\leq \|\rho_1\|_{L^1} + \|\rho_1\|_{L^{p_\ak}} \lt(\int_{|X-x| \in [r,1]}\frac{1}{|X-x|^d}\dd x\rt)^{\frac{\ak}{d}}
			\\
			&\leq \|\rho_1\|_{L^1} + \omega_d\|\rho_1\|_{L^{p_\ak}}\ln_-(r)^{\frac{\ak}{d}}
			\\
			&\leq C_d \lt(\|\rho_1\|_{L^{p_\ak}}+\|\rho_1\|_{L^{1}}\rt)\lt(1 + \ln_{-} r \rt).
		\end{align*}
		For the case $r > 1$, it is clear to obtain
		\begin{equation*}
			\int_{|X-x|\geq r}\frac{\rho_1(x)}{|X-x|^\ak}\dd x \leq \|\rho_1\|_{L^1}.
		\end{equation*}
		This yields
		\begin{equation*}
			I_2^1 \leq C_d\lt(\|\rho_1\|_{L^{p_\ak}} + \|\rho_2\|_{L^{p_\ak}}+2\rt)\E\lt[|X-Y|^k\rt]\lt(1 + \ln_- r \rt).
		\end{equation*}
		On the other hand, by Hölder's inequality
		\begin{multline*}
			I_2^2 \leq \E\lt[|\overline{X}-\overline{Y}|^k\lt(\frac{1}{|X-\overline{X}|^\ak}+\frac{1}{|Y-\overline{Y}|^\ak}\rt)\indic_{|X-\overline{X}|\wedge|Y-\overline{Y}|> r}\rt]^{1/k}
			\\
			\times \E\lt[ |X-Y|^{k} \lt(\frac{1}{|X-\overline{X}|^\ak}+\frac{1}{|Y-\overline{Y}|^\ak}\rt) \indic_{|X-\overline{X}|\wedge|Y-\overline{Y}|> r} \rt]^{1-1/k}.
		\end{multline*}
		The second term of the product is some power of the term $I_2^1$ which has already been dealt with, and so is the second term by symmetry of the roles of $(X,Y)$ and $(\overline{X},\overline{Y})$. So that 
		\begin{equation*}
			\mathcal{I}_2 \leq C_{d,\ak} \lt(\|\rho_1\|_{L^{p_\ak}} + \|\rho_2\|_{L^{p_\ak}}+2\rt)\E\lt[|X-Y|^k\rt]\lt(1 + \ln_- r \rt).
		\end{equation*}
		Putting all these estimates together yields for any $r>0$
		\begin{align*}
			\E[ |X-Y|^{k-1} &\lt| K(X-\bar{X}) - K(Y-\bar{Y}) \rt|]
			\\
			&\leq C'_{d,\ak}  \lt( \|\rho_1\|_{L^{p_\ak}} + \|\rho_2\|_{L^{p_\ak}}\rt) r\E\lt[ |X-Y|^k \rt]^{1-1/k}
			\\
			&\quad + C_{d,\ak} \lt(\|\rho_1\|_{L^{p_\ak}} + \|\rho_2\|_{L^{p_\ak}}+2\rt)\E\lt[|X-Y|^k\rt]\lt(1 + \ln_- r \rt).
		\end{align*}
		Choosing $r= \E\lt[ |X-Y|^k \rt]^{1/k} $ yields the desired result.
	\end{proof}
		
	\begin{proof}({\bf Proof of Theorem~\ref{thm:welpos}}.)
		Let $\rho^\init$ be such as the assumptions of Theorem~\ref{thm:welpos}. For $\e>0$ define
		\begin{equation*}
			K_\eps(x) = \begin{cases}
			K(x) 			&\text{if } |x| \geq \eps\\ 
			\eps^{-\ak}x	&\text{else,}
			\end{cases}
		\end{equation*}
		and consider the following nonlinear PDE with smooth coefficient		
		\begin{equation}\label{FKSreg}
			\partial_t \rho_\eps = \I(\rho_\eps) + \lambda\divg((K_\eps*\rho_\eps)\rho_\eps),
		\end{equation}
		with the initial condition $\rho_{\eps}^\init=\rho^\init$. Since the kernel $K_\eps$ is ($\eps^{-\ak}$)-Lipschitz, the difficulty for the well posedness of \eqref{FKSreg} does not come from the quadratic nonlinear term. Existence and uniqueness of solution for this nonlinear problem is straightforward in the case $\ak \in (1,2)$. Indeed it is sufficient to apply a standard fixed point in $C([0,T],\PP_k)$ technique using Wasserstein metric, since in this case the solution a priori enjoys some $k\in (1,\ak)$ moment. In the case $\ak\in(0,1]$, it is no more possible to use the completeness of $C([0,T],\PP_\kappa), \kappa>1$, and we have to proceed by compactness (see \cite[Appendix B]{salem_propagation_2017}).
		 
	 	Then due to Proposition~\ref{LlnLest} (if $\al=\ak$), Corollary~\ref{cor:estim_Lp_tx} (if $\ak\neq \al$), and Proposition~\ref{prop:propag_m}, $\rho_\e\in L^1([0,T], L^{p})\cap L^{\infty}([0,T],L_k^1)$ for some $p\geq p_\ak$ and $T>0$ depending or not on $\rho^\init$, uniformly with respect to $\e>0$.
	 
		\step{1. Tightness} 		
		Let $X_0$ be a random variable on $\R^d$ of law $M_0^{-1}\rho^\init$ and $(Z_t^\al)_{t\geq 0}$ be an $\al$-stable Lévy process independent of $X_0$. We denote by $(X_t^\e)_{t\geq 0}$ (respectively $(X_t^{\e'})_{t\geq 0}$) the solution to the following SDE
		\begin{equation*}
			X_t^\e = X_0 -\lambda \int_0^t\intd K_\e(X_s^\e-x)\rho_\e(\d x)\dd s + Z_t^\al.
		\end{equation*}
		Note that $(\mu_\e(t))_{t\geq 0} := (\L(X_t^\e))_{t\geq 0}$ solves the linear PDE
		\begin{equation*}
			\partial_t\mu_\e = I(\mu_\e) +  \lambda\divg((K_\e*\rho_\e)\mu_\e),
		\end{equation*}
		with initial condition $\mu_\e^\init = M_0^{-1}\rho^\init$. Therefore $\L(X_t^\e)=M_0^{-1}\rho_\e(t)$ by uniqueness of solution to this linear PDE with smooth coefficient.
		
		Assume first $0<1-\ak<\al$. It is direct to obtain in this case for any $\gamma>1$
		\begin{align*}
			\KK_\e^\gamma :=& \iintd |K_\e(x-y)|^\gamma \rho_\e(\d x) \rho_\e(\d y) \\
			&\leq C_{\ak,\gamma}\iintd \lt(|x-y|\vee \e \rt) ^{\gamma(1-\ak)} \rho_\e(\d x) \rho_\e(\d y) \\
			&\leq C_{\ak,\gamma}\intd \lt( |x|^{(1-\ak)\gamma} + \e^{(1-\ak)\gamma} \rt)\rho_\e(\d x).
		\end{align*}
		Then choose $\gamma =\frac{k}{1-\ak}>1$ and use the symmetry between $x$ and $y$ to get
		\begin{align*}
			\sup_{0<\e<1}\int_0^T\KK_\e^\gamma(t) \dd t &\leq \sup_{0<\e<1}\int_0^T\iintd  C_{\ak,\gamma}\lt( |x|^{(1-\ak)\gamma} + \e^{(1-\ak)\gamma} \rt)\rho_\e(\d x) \rho_\e(\d y)\dd t
			\\ 
			&\leq C_{\ak,\gamma,T} \lt(\sup_{\e>0}\|\rho_\e\|_{L^\infty((0,T),L^1_k)}+1\rt) < \infty.
		\end{align*}
		Assume now that $\ak>1$. First note that Hardy-Littlewood-Sobolev inequality yields for any $\e>0$ and $\gamma>1$ to be fixed later
		\begin{align*}
			\KK_\e^\gamma \leq\iintd |x-y|^{-(\ak-1)\gamma} \rho_\e(\d x) \rho_\e(\d y) \leq C \|\rho_\e\|^2_{L^{\frac{d}{d+\gamma(1-\ak)/2}}}.
		\end{align*}
		By interpolation between Lebesgue spaces, if $\gamma <\frac{2(p-1)d}{\ak-1}$, then
		\begin{align*}
			\|\rho_\e\|_{L^{\frac{d}{d+\gamma(1-\ak)/2}}} &\leq \|\rho_\e\|^\theta_{L^{p}}\|\rho_\e\|^{1-\theta}_{L^1},
		\end{align*}
		where $\theta = \gamma \frac{(\ak-1)q}{2d}$ with $q=p'$. Therefore 
		\begin{align*}
			\sup_{\e>0} \int_0^T \KK_\e^\gamma(t) \dd t
			&\leq \sup_{\e>0}\int_0^T \|\rho_\e\|^2_{L^{\frac{d}{d+\gamma(1-\ak)/2}}}\dd t
			\\
			&\leq \sup_{\e>0}\int_0^T \|\rho_\e\|^{\gamma \frac{(\ak-1)q}{d}}_{L^{p}} \dd t < \infty,
		\end{align*}
		provided that $\gamma\in \lt(1,\frac{d}{(\ak-1)q}\rt)$. Then in both cases, denote the stochastic process
		\begin{equation*}
			J_t^\e = -\lambda\int_0^t \intd K_\e(X_s^\e-x)\rho_\e(\d x)\dd s,
		\end{equation*}
		and observe that for any $ 0\leq s<t\leq T$, it holds by Hölder's inequality
		\begin{align*}
			|J_t^\e-J_s^\e|&\leq \lt| \int_s^t  \intd K_\e(X_u^\e-x)\rho_\e(\d x)\dd u \rt|
			\\
			& \leq \int_s^t \intd   \lt|  K_\e(X_u^\e-x)\rt|    \rho_\e(\d x) \dd u
			\\
			& \leq |t-s|^{1/\gamma'} \int_0^T \lt( \intd |K_\e (X_u^\e-x) |^\gamma \rho_\e(\d x) \rt)^{1/\gamma}\dd u,
			\end{align*}
		so that by the estimates carried out in the beginning of this step  and Jensen's inequality
		\begin{align*}
			\sup_{0<\e<1}\E\lt[  \sup_{0\leq s<t\leq T}  \frac{|J_t^\e-J_s^\e|}{|t-s|^{1/\gamma'}}\rt]&
			\leq\int_0^T \E \lt[ \lt( \intd |K_\e (X_u^\e-x) |^\gamma \rho_\e(\d x) \rt)^{1/\gamma}\rt]\dd u
			\\
			&\leq \int_0^T \lt(\E \lt[  \intd |K_\e (X_u^\e-x) |^\gamma \rho_\e(\d x) \rt]\rt)^{1/\gamma}\dd u
			\\
			&\leq  T^{1/\gamma'} \lt(\int_0^T \KK_\e^\gamma(t) \dd t\rt)^{1/\gamma} < \infty.
		\end{align*}
		We then deduce that the family of law of the processes $(J_t^\e)_{t\in[0,T]}$ is tight in $\PP(C([0,T],\R^d))$. Indeed let us denote 
		\begin{equation*}
			\KK_{R} := \lt\{f\in C([0,T],\R^d), f(0)=0, \sup_{0\leq s<t\leq T} \frac{|f(t)-f(s)|}{|t-s|^{1/\gamma'}}\leq R \rt\},
		\end{equation*}
		which is compact due to Ascoli-Arzelà's Theorem. By Markov's inequality we get for any $\e>0$
		\begin{align*}
			\mathbb{P}\lt( (J_t^\e)_{0\leq t \leq T} \notin \KK_{R} \rt)
			&= \mathbb{P}\lt( \sup_{0\leq s<t\leq T}  \frac{|J_t^\e-J_s^\e|}{|t-s|^{1/\gamma'}} >R \rt)
			\\
			&\leq R^{-1}\sup_{1>\e>0}\E\lt[  \sup_{0\leq s<t\leq T}  \frac{|J_t^\e-J_s^\e|}{|t-s|^{1/\gamma'}}\rt].
		\end{align*}
		Hence the family of law of the processes $\L^\e=\L\lt((X_t^\e = X_0 + J_t^\e + Z_t^\al)_{0\leq t\leq T}\rt)\in \PP(D([0,T],\R^d))$ is tight. Thus, we can find a sequence $\e_n$ going to $0$ such that $\L_{\e_n}$ goes weakly to some $\pi\in \PP(D([0,T],\R^d))$. For any $t\in[0,T]$, we define $\mathbf{e}_t: g \in D([0,T],\R^d)\mapsto g(t)\in \R^d$ and $\rho(t):= (\mathbf{e}_t)\# \pi \in \PP$ the push-forward of $\rho$ by $\mathbf{e}_t$. Since for any $t\in[0,T]$, $(\mathbf{e}_t)\#\L^\e=\rho_\e(t)$, $\rho_{\e_n}(t)$ goes weakly to $\rho(t)$ in $\PP_k$,
		
		\step{2. A priori properties of the limit point}
		
		By lower semicontinuity of $\|\cdot\|_{L^{p}}$ and $\|\cdot\|_{L^1_k}$ with respect to the weak convergence of measures and Fatou's Lemma, it holds $\rho\in L^1([0,T], L^{p})\cap L^{\infty}([0,T],L_k^1)$. We now show that $\rho$ satisfies \eqref{eq:varFKS}. Indeed for $\varphi\in C^2_c$ denote
		\begin{align*}
			\F(\rho,t)=&\intd \lt(\rho(t)-\rho^\init\rt)\varphi - \int_0^t \intd \rho(s)  \lt(I(\varphi) - K_c*(\rho(s)\cdot\nabla\varphi)\rt)\dd s
			\\
			&- \int_0^t\iintd K_0(x-y)(\nabla\varphi(x)-\nabla\varphi(y))\rho(s,\d x)\rho(s,\d y)\dd s.
		\end{align*}
		Since $\rho_\e$ solves \eqref{FKSreg}, it holds for any $t>0$
		\begin{equation*}
			\F_\e(\rho_\e,t)=0,
		\end{equation*}
		where $\F_\e$ is the same functional as $\F$ with $K$ replaced with $K_\e$. So that for any $t\in[0,T]$
		\begin{equation*}
			\lt|\F(\rho,t)\rt| \leq \lt|\F(\rho,t)-\F_\eta(\rho,t)\rt| + \lt|\F_\eta(\rho,t)-\F_\eta(\rho_\e,t)\rt| + \lt|\F_\eta(\rho_\e,t)-\F_\e(\rho_\e,t)\rt|.
		\end{equation*}
		But note that for $\eta > \e \geq 0$
		\begin{equation*}
			\lt|K_\e(x)-K_{\eta}(x)\rt| \leq \indic_{\e\leq |x|\leq \eta}|x|^{1-\ak} \leq \eta |x|^{-\ak}.
		\end{equation*}
		We deduce that for any $\varrho\in L^1([0,T]; L^{p_\ak})$, by \eqref{eq:HLS}, it holds 
		\begin{align*}
			\lt|\F_\eta(\varrho,t)-\F_\e(\varrho,t) \rt|&\leq  \eta  \int_0^t\iint  |x-y|^{-\ak}\varrho_s(\d x)\varrho_s(\d y)\dd s
			\\
			&\quad \leq \eta\, \CHLS_{d,\ak,p_{\ak/2}} \int_0^t \|\varrho\|^2_{L^{\frac{2d}{2d-\ak} } }\d s\leq \eta\, \CHLS_{d,\ak,p_{\ak/2}} \int_0^t\|\varrho\|_{L^{p_\ak}}\dd s.
		\end{align*}
		So that
		\begin{align*}
			\lt|\F(\rho,t)\rt|\leq \,&\eta\, \CHLS_{d,\ak,p_{\ak/2}} \lt( \int_0^t\|\rho\|_{L^{p_\ak}}\dd s+\sup_{0<\e<1} \int_0^t\|\rho_\e\|_{L^{p_\ak}}\dd s\rt) 
			\\
			&+ |\F_\eta(\rho,t)-\F_\eta(\rho_\e,t)|.
		\end{align*}
		Letting first $\e$ go to $0$ makes the second term in the r.h.s. vanish, since for fixed $\eta>0$, $\F_\eta$ is a smooth function on $L^1([0,T]; L^{p_\ak})$ and $\rho_\e$ goes weakly to $\rho$ as $\e$ goes to $0$, then letting $\eta$ go to $0$ yields $\F(\rho,t) = 0$, and $\rho$ is a solution to the \eqref{eq:FKS} equation in the sense of Definition \ref{def:sol}.
		
		\step{3. Uniqueness of the limiting point}
		
		We now show that there exists at most one such solution. Let $\rho$, $\tilde{\rho}\in L^1([0,T], L^{p})\cap L^{\infty}([0,T],L_k^1)$ for some $p\geq p_\ak$ and $T>0$ be two solutions to the \eqref{eq:FKS} equation with initial condition $\rho^\init$. We argue by a coupling argument. Define 
		\begin{align*}
			X_t := X_0 -\lambda \int_0^t \intd K(X_s-y)\rho(\d y)\dd s + Z_t^\al
			\\
			Y_t := X_0 -\lambda \int_0^t \intd K(Y_s-y)\tilde{\rho}(\d y)\dd s + Z_t^\al.
		\end{align*}
		Due to the $L^{p}$ regularity of $\rho$ and $\tilde{\rho}$ and Lemma~\ref{lem:loep}, $K*\rho$ and $K*\tilde{\rho}$ are Lipschitz if $p>p_\ak$ and log-Lipschitz if $p=p_\ak$. But $\mu(t) := \L(X_t)$ solves the linear PDE
		\begin{equation*}
			\partial_t \mu = \I(\mu) + \lambda\divg\lt((K*\rho)\mu\rt),
		\end{equation*}
		for the initial condition $\mu(0) = M_0^{-1}\rho^\init$. By uniqueness of solution to this linear PDE with Lipschitz or log-Lipschitz coefficient, $\L(X_t)=M_0^{-1}\rho(t)$ (respectively $\L(Y_t)=M_0^{-1}\tilde{\rho}(t)$). Denoting $Z_s=X_s-Y_s$, and $\pi_s=\L(X_s,Y_s)$ yields
		\begin{align*}
			|Z_t|^2 &= -2\lambda \int_0^t \iintd Z_s\cdot (K(X_s-x)-K(Y_s-y)) \pi_s(\d x,\d y) \dd s.
		\end{align*}
		Introducing $\bar{X}_s$ i.i.d. from $X_s$ (respectively $\bar{Y}_s$ i.i.d. from $Y_s$) and taking the expectation yields
		\begin{align*}
			\E\lt[|Z_t|^2\rt]&\leq 2\lambda\int_0^t \E\lt[|Z_s| |K(X_s-\bar{X}_s)-K(Y_s-\bar{Y}_s) | \rt] \d s
			\\
			&\leq \lt\{\begin{array}{ll}
			\!\!\!C \int_0^t \lt(\|\rho \|_{L^p}+\|\tilde{\rho}\|_{L^p}+2\rt) \E\lt[ |Z_s|^2  \rt]\d s, &\text{if } p> p_\ak
			\\ 
			\!\!\!C \int_0^t\lt(\|\rho\|_{L^{p_\ak}}+\|\tilde{\rho}\|_{L^{p_\ak}}+2\rt) \E\lt[ |Z_s|^2  \rt]\lt( 1+\frac{\ln^{-} \lt(\E\lt[|Z_s|^2\rt] \rt) }{2} \rt)\d s &\text{else.}
			\end{array}\rt.
		\end{align*}
		where we used Lemma~\ref{lem:loep}. By Gronwall's inequality, we get
		\begin{equation*}
			\forall t\in[0,T], \E\lt[|Z_t|^2\rt]=0, \text{ i.e. } \forall t\in[0,T], \rho(t)=\tilde{\rho}(t),
		\end{equation*}
		which yields the desired results.
	\end{proof}

\section{Proof of Theorem~\ref{thm:BU}}\label{sec:blowup}

	We first study the local and asymptotic space behaviour of the fractional Laplacian of some basic functions.
	
	\begin{lem}
		Let $\varphi\in C^\infty_c$ be such that $\intd \varphi = 1$. Then for any $\ak>\al$
		\begin{equation}\label{eq:lap_pol_surcritique}
			\lt|\I(|x|^\ak\varphi)\rt| \leq C\weight{x}^{-(d+\al)}.
		\end{equation}
	\end{lem}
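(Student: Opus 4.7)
Write $f(x) := |x|^\ak\varphi(x)$, which is bounded and compactly supported in some ball $B_R$ containing $\mathrm{supp}(\varphi)$. My plan is to split the analysis into the two regimes $|x|>2R$ (to extract the algebraic decay) and $|x|\leq 2R$ (where only boundedness is needed), and to combine them using that $\weight{x}^{-(d+\al)}$ is bounded below on the compact region.

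On $|x|>2R$ the estimate will be immediate: since $f(x)=0$ and $|y-x|\geq |x|/2$ for every $y\in B_R$, definition~\eqref{def:lapfrac} gives
$$|\I(f)(x)| \leq c_{d,\al}\int_{B_R}\frac{|f(y)|}{|y-x|^{d+\al}}\dd y \leq C\|f\|_{L^1}|x|^{-(d+\al)}.$$
On $|x|\leq 2R$ I would rewrite $\I(f)$ in the symmetrized form
$$\I(f)(x) = \frac{c_{d,\al}}{2}\intd \frac{f(x+h)+f(x-h)-2f(x)}{|h|^{d+\al}}\dd h$$
and split the $h$-integral at $|h|=1$. The outer piece is controlled by $4\|f\|_{L^\infty}\int_{|h|\geq 1}|h|^{-(d+\al)}\dd h<\infty$. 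For the inner piece, the key ingredient is a uniform second-difference bound
$$|f(x+h)+f(x-h)-2f(x)| \leq C|h|^{\min(\ak,2)} \quad \text{for } |x|\leq 2R,\ |h|<1,$$
which combined with $\int_{|h|<1}|h|^{\min(\ak,2)-d-\al}\dd h<\infty$ (finite since $\al<2$ and $\min(\ak,2)>\al$) yields the boundedness of $\I(f)$ on $|x|\leq 2R$.

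The hard part will be precisely this second-difference bound, because $g(x):=|x|^\ak$ is not $C^2$ at the origin. I would expand $f(x+h)+f(x-h)-2f(x)$ via a discrete product rule into four pieces: $\varphi(x)$ times second differences of $g$, $g(x)$ times second differences of $\varphi$, and two cross terms pairing first differences of $g$ and $\varphi$. The $\varphi$-pieces are $O(|h|^2)$ by smoothness, and the cross terms factor as a first difference of $g$ times a first difference of $\varphi$, bounded by $|h|^{\min(\ak,1)}\cdot|h|$. For the $g$-piece I would split by whether $|h|\leq|x|/2$ or $|h|>|x|/2$: in the first case $g$ is smooth on the segment $[x-h,x+h]$ and Taylor yields $C|h|^2|x|^{\ak-2}$, which for $\ak<2$ absorbs into $C|h|^\ak$ using $|x|\geq 2|h|$, while for $\ak\geq 2$ it gives a clean $C|h|^2$; in the second case the trivial bound $|g(y)|\leq(|x|+|h|)^\ak\leq C|h|^\ak$ suffices. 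Assembling these estimates produces the claimed $|h|^{\min(\ak,2)}$ control uniformly in $|x|\leq 2R$, and merging with the far-field estimate gives the stated weighted bound.
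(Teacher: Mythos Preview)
Your argument is correct and shares the paper's two-region strategy: the decay $|x|^{-(d+\al)}$ for $x$ outside the support is obtained exactly as in the paper, and the remaining task is to prove $\I(f)\in L^\infty$ on a fixed ball. For this local part, however, you take a different technical route. The paper writes $\I(f)$ in the non-symmetric form, splitting into the cases $\al\in(0,1)$ (no correction) and $\al\in[1,2)$ (first-order Taylor correction), and then invokes the blanket regularity statement $\varphi_\ak\in W^{\ak,\infty}$ to bound the integrand by $C|x-y|^{-d+\ak-\al}$, which is locally integrable since $\ak>\al$. You instead use the symmetrized second-difference representation, which treats all $\al\in(0,2)$ uniformly without a case split, and you make the Hölder-type regularity of $|x|^\ak$ at the origin fully explicit via the dichotomy $|h|\leq|x|/2$ versus $|h|>|x|/2$ and the discrete product rule. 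Your approach is slightly longer but arguably more transparent about where the hypothesis $\ak>\al$ enters (through the integrability of $|h|^{\min(\ak,2)-d-\al}$ near $0$), whereas the paper's version is terser but leans on the reader to unpack what ``$W^{\ak,\infty}$'' means for a function with a conical singularity.
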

	
	\begin{proof}
		Let $\varphi_\ak := |x|^\ak \varphi$ and $R>0$ be such that $\mathrm{supp}(\varphi)\subset\ball_R$. Then, for any $x\in\ball_R^c$, we obtain
		\begin{equation}\label{eq:estim_I_phi_ak}
			\I(\varphi_\ak)(x) = \int_{\ball_R} \frac{\varphi_a(y)\dd y}{|x-y|^{d+\al}} \in \lt(\frac{m_{\varphi_\ak}}{(|x|+R)^{d+\al}},\frac{m_{\varphi_\ak}}{(|x|-R)^{d+\al}}\rt).
		\end{equation}
		Now, assume $x\in\ball_r$ for a given $r>R$. Then we write the fractional Laplacian as 
		\begin{equation*}
			\I(\varphi_\ak) = \intd h_{\al,\ak}(y)\dd y,
		\end{equation*}
		where
		\begin{align*}
			h_{\al,\ak}(y) &= \frac{\varphi_\ak(y)-\varphi_\ak(x)}{|x-y|^{d+\al}} &&\text{when } \al\in(0,1)
			\\
			h_{\al,\ak}(y) &= \frac{\varphi_\ak(y)-\varphi_\ak(x)-(y-x)\cdot\nabla\varphi_\ak(x)}{|x-y|^{d+\al}} &&\text{when } \al\in[1,2).
		\end{align*}
		Then since $\varphi_\ak\in W^{\ak,\infty}$, we obtain that $h_{\al,\ak}(y)\leq C |x-y|^{-d+\ak-\al}$, which, since $\ak>\al$, implies that $h_{\al,\ak}\in L^1_\mathrm{loc}$. Moreover, when $|y|>r$, then 
		\begin{equation*}
			h_{\al,\ak}(y) \leq \frac{C_{\varphi}r^\al}{(|y|-r)^{d+\al}} \in L^1(B_r^c).
		\end{equation*}
		Therefore, $h_{\al,\ak}\in L^1$ uniformly in $x\in B_r$. Hence $\I(\varphi_\ak)\in L^\infty(B_r)$, which, combined with \eqref{eq:estim_I_phi_ak}, leads to the expected result.
	\end{proof}
	
	\begin{lem}
		Let $\varphi\in C^\infty_c$ be such that $\intd \varphi = 1$ and $\mathds{1}_{\ball_r} \leq \varphi \leq \mathds{1}_{\ball_{2r}}$. Then for any $k\in(0,\al)$
		\begin{equation}\label{eq:lap_pol_souscritique}
			\lt|\I(|x|^k\varphi^c)\rt| \leq C\weight{x}^{k-\al},
		\end{equation}
		where $\varphi^c = 1-\varphi$.
	\end{lem}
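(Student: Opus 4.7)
The plan is to split the analysis into two regimes according to the size of $|x|$. Fix some $R_0 > 2r$ (say $R_0 = 4r$) and treat separately the regions $|x| \leq R_0$ and $|x| > R_0$.

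\textbf{Far-field regime $|x| > R_0$.} I would decompose
\begin{equation*}
	|x|^k\varphi^c = |x|^k - |x|^k\varphi =: g - h,
\end{equation*}
and estimate the two pieces separately. For the homogeneous piece $g(y) = |y|^k$, the translation and dilation invariance of the kernel in \eqref{def:lapfrac} give $\I(g)(x) = c_{d,\al,k}|x|^{k-\al}$ for $x \neq 0$, where the constant is finite precisely because the assumption $k \in (0,\al)$ guarantees integrability both near $y = x$ (via the symmetric formulation when $\al \geq 1$) and at infinity, since $|y|^{k-d-\al}$ is integrable at infinity iff $k < \al$. For the compactly supported piece $h = |x|^k\varphi$, which is supported in $\ball_{2r}$, the same pointwise bound used in the previous lemma applies: for $|x| > R_0 > 2r$,
\begin{equation*}
	|\I(h)(x)| = c_{d,\al}\lt|\int_{\ball_{2r}} \frac{|y|^k\varphi(y)}{|x-y|^{d+\al}}\d y\rt| \leq \frac{C_h}{(|x|-2r)^{d+\al}} \leq C\weight{x}^{-(d+\al)}.
\end{equation*}
Since $-(d+\al) \leq k-\al$, summing the two estimates yields $|\I(|x|^k\varphi^c)(x)| \leq C\weight{x}^{k-\al}$ on this regime.

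\textbf{Near-field regime $|x| \leq R_0$.} Here $\weight{x}^{k-\al} \asymp 1$, so the goal reduces to showing that $\I(f)$ is uniformly bounded for $|x| \leq R_0$, with $f := |x|^k\varphi^c$. The crucial observation is that $f$ is $C^\infty$ everywhere on $\R^d$, since $\varphi^c$ vanishes on a neighborhood of the origin, where $|x|^k$ is not smooth. For $|x|\leq R_0$, I would split
\begin{equation*}
	\I(f)(x) = c_{d,\al}\int_{\ball_1(x)} \frac{f(y)-f(x)-\nabla f(x)\cdot(y-x)\indic_{\al\geq 1}}{|x-y|^{d+\al}}\d y + c_{d,\al}\int_{\ball_1(x)^c}\frac{f(y)-f(x)}{|x-y|^{d+\al}}\d y,
\end{equation*}
understood with the principal value when $\al=1$. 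The first integral is bounded using the $C^2$ regularity of $f$ on $\ball_1(x)$ (uniformly in $|x|\leq R_0$) together with $\int_{\ball_1}|z|^{2-d-\al}\d z<\infty$. For the second, I would split further into $\ball_1(x)^c\cap\ball_R$, where $f$ is bounded and the kernel is integrable, and the far tail $\ball_R^c$, where
\begin{equation*}
	\int_{\ball_R^c}\frac{|y|^k}{|x-y|^{d+\al}}\d y \leq C\int_R^\infty r^{k-\al-1}\d r < \infty,
\end{equation*}
the integrability being again the assumption $k<\al$.

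\textbf{Main obstacle.} The technical heart is the identity $\I(|y|^k)(x)=c_{d,\al,k}|x|^{k-\al}$ for $k\in(0,\al)$, which must be justified carefully: both the integral near $y=x$ (when $\al\geq 1$) and the integral at infinity must be handled by the symmetric principal value formulation, and scaling then forces the claimed power. Everything else is bookkeeping relying on the dichotomy that $k<\al$ gives decay at infinity while the cutoff $\varphi^c$ gives smoothness at the origin.
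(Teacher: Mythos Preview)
Your argument is correct. The paper itself does not spell out a proof here but simply cites \cite[Remark~4.2]{biler_blowup_2010} for $k>1$ and \cite[Proposition~2.2]{lafleche_fractional_2018} for $k<1$; your explicit decomposition---using the scaling identity $\I(|y|^k)=c_{d,\al,k}\,|x|^{k-\al}$ in the far field and, in the near field, the smoothness of $|y|^k\varphi^c$ together with the tail integrability afforded by $k<\al$---is essentially the content of those references adapted to the present weight, so the approaches coincide.
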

	
	\begin{proof}
		The proof is a straightforward adaptation of \cite[Remark~4.2]{biler_blowup_2010} for $k>1$ and \cite[Proposition~2.2]{lafleche_fractional_2018} for $k<1$.
	\end{proof}
	
	We are now ready to prove the finite time blow-up.
	
	\begin{proof}({\bf Proof of Theorem~\ref{thm:BU}}.)
		Let $\varphi\in C^\infty_c(\R)$ even and nonincreasing be such that $\int_\R \varphi = 1$ and $\mathds{1}_{\ball_r} \leq \varphi\leq \mathds{1}_{\ball_{2r}}$ for a given $r\in(0,1/2)$ and $\varphi^c = 1-\varphi$. We define
		\begin{equation*}
			m(x) := \varphi(|x|)|x|^\ak + \varphi^c(|x|)|x|^k.
		\end{equation*}
		Assuming the existence of $\rho\in L^\infty((0,T),L^1_k)$ to the \eqref{eq:FKS} equation, we get
		\begin{equation}
		\label{eq:diffmom}
		\begin{aligned}
			\dt \lt(\intd \rho m\rt) & = \intd \rho \I(m) - \lambda\iintd \frac{(\nabla m(x)-\nabla m(y))\cdot(x-y)}{|x-y|^\ak}\rho(\d x)\rho(\d y)
			\\
			&=\mathcal{I}_1-\lambda\,\mathcal{I}_2,
		\end{aligned}
		\end{equation}	

		\noindent\textit{\textbf{$\bullet$ Estimate of $\mathcal{I}_1$.}} By the inequalities \eqref{eq:lap_pol_surcritique} and \eqref{eq:lap_pol_souscritique}, we get
		\begin{equation}\label{eq:estim_I_m_mix}
		\I(m) \leq C\weight{x}^{k-\al}.
		\end{equation}
		Hence, for some constant $C_1>0$, the following inequality holds
		\begin{equation*}
			\mathcal{I}_1\leq C \intd \weight{x}^{k-\al} \rho  \leq C_1M_0.
		\end{equation*}
		
		\noindent\textit{\textbf{$\bullet$ Estimate of $\mathcal{I}_2$.}}
		
		\noindent$\diamond$ \textit{Step one: case $1< k < \al < \ak$}. In this case, by convexity we have for any $(x,y)\in \R^d\times\R^d$
	\begin{equation*}
		(\nabla m(x)-\nabla m(y))\cdot(x-y) = g(x,y)-h(x,y)\,x\cdot y \geq 0
	\end{equation*}
	with $m'(|x|) = \nabla m(x)\cdot \frac{x}{|x|}$ and
		\begin{align*}
			g(x,y) &= m'(|x|)|x|+m'(|y|)|y|
			\\
			h(x,y) &= m'(|x|)|x|^{-1}+m'(|y|)|y|^{-1}.
		\end{align*}
		Since $|x-y|^\ak \leq 2^\ak(|x|^\ak+|y|^\ak)$, we obtain
		\begin{align*}
		\mathcal{I}_2=\iintd \frac{g(x,y) - h(x,y) \lt(x\cdot y\rt)}{|x-y|^\ak}\rho(\d x)\rho(\d y) \geq  &\iintd \frac{g(x,y)}{2^{\ak}(|x|^\ak+|y|^\ak)}\rho(\d x)\rho(\d y)\\
		&-\iintd \frac{ h(x,y) x\cdot y}{2^{\ak}(|x|^\ak+|y|^\ak)}\rho(\d x)\rho(\d y).
		\end{align*}
		Next since
		\[
		\iintd \frac{ h(x,y) x\cdot y}{2^{\ak}(|x|^\ak+|y|^\ak)}\rho(\d x)\rho(\d y)=\intd \lt(\intd\frac{ h(x,y) x\cdot y}{2^{\ak}(|x|^\ak+|y|^\ak)}\rho(\d x)\rt)\rho(\d y),
		\]
		by Fubini's theorem, and since for any $y\in \R^d$ the map $x\mapsto \frac{h(x,y) x\cdot y}{2^{\ak}(|x|^\ak+|y|^\ak)}$ is odd and $\rho$ is even, we get 
		\begin{align}\label{eq:int_g}
			\mathcal{I}_2 &\geq  \iintd \frac{g(x,y)}{2^{\ak}(|x|^\ak+|y|^\ak)}\rho(\d x)\rho(\d y).
		\end{align}
		We remark that if $(x,y)\in \ball_r^2$,
		\begin{equation*}
			\frac{g(x,y)}{2^{\ak}(|x|^\ak+|y|^\ak)} = \frac{\ak}{2^{\ak}}.
		\end{equation*}
		If $(x,y)\in (\ball_{2r}^c)^2$,
		\begin{equation*}
			\frac{g(x,y)}{2^{\ak}(|x|^\ak+|y|^\ak)} = \frac{k(|x|^k+|y|^k)}{2^{\ak}(|x|^\ak+|y|^\ak)} \geq \frac{k(2r)^{\ak-k}}{2^{\ak}(|x||y|)^{\ak-k}}.
		\end{equation*}
		If $(x,y)\in \ball_{r}\times \ball_{2r}^c$,
		\begin{equation*}
			\frac{g(x,y)}{2^{\ak}(|x|^\ak+|y|^\ak)} = \frac{\ak|x|^\ak+k|y|^k}{2^{\ak}(|x|^\ak+|y|^\ak)} \geq \frac{k|y|^k}{2^{\ak}(r+|y|^\ak)}.
		\end{equation*}
		Moreover, when $x\in \ball_{2r}\backslash\ball_r$,
		\begin{equation*}
			m'(|x|)|x| = \varphi'(|x|)(|x|^{\ak+1} - |x|^{k+1}) + \ak\varphi(|x|)|x|^{\ak} + k\varphi^c(|x|) |x|^k.
		\end{equation*}
		Remarking that we can take $\varphi$ decreasing and $r<1/2$, which implies that $|x|\leq 1$ and
		\begin{equation*}
			m'(|x|)|x| \geq \ak\varphi(|x|)|x|^{\ak} + k\varphi^c(|x|) |x|^k \geq k|x|^\ak,
		\end{equation*}
		it allows us to do the same kind of estimates for the remaining $(x,y)\in\R^{2d}$ and obtain
		\begin{equation}\label{eq:estim_g_below}
			\frac{g(x,y)}{2^{\ak}(|x|^\ak+|y|^\ak)} \geq C\weight{x}^{k-\ak}\weight{y}^{k-\ak}.
		\end{equation}
		Combining \eqref{eq:estim_I_m_mix}, \eqref{eq:int_g} and \eqref{eq:estim_g_below}, we obtain
		\begin{align}\nonumber
			\dt \lt(\intd \rho\,m\rt) &\leq C_1\intd \rho \weight{x}^{k-\al} - C_2 \lambda\iintd \weight{x}^{k-\ak}\weight{y}^{k-\ak}\rho(\d x)\rho(\d y)
			\\\label{eq:ineq_blowup}
			&\leq C_1 M_{k-\al} - C_2 \lambda M_{k-\ak}^2,
		\end{align}
		where $M_k = \intd \rho \weight{x}^k$. We define
		\begin{equation*}
			Y := M_0 + \intd \rho m  = \intd \rho \lt(1 + m\rt).
		\end{equation*}
		Remarking that 
		\begin{equation*}
			\frac{1}{2}(1+m)\leq \weight{x}^k \leq 2^{k/2}(1+m),
		\end{equation*}
		we obtain that $Y$ can always be compared to $M_k$ up to a constant depending on $k$. Therefore, Hölder's inequality yield
		\begin{align*}
			M_0 \leq M_{k-\ak}^\frac{k}{\ak} M_k^{1-\frac{k}{\ak}} \leq C M_{k-\ak}^\frac{k}{\ak} Y^{1-\frac{k}{\ak}}.
		\end{align*}
		Thus, using the fact that $M_{k-\al} < M_0$ because $k-\al < 0$ and the conservation of the total mass $M_0$, we obtain
		\begin{equation*}
			\frac{\d Y}{\d t} \leq C_1 M_0 - C'_2 \lambda M_0^\frac{2\ak}{k} Y^{2\lt(1-\frac{\ak}{k}\rt)}.
		\end{equation*}
		By assumption~\eqref{eq:condBU_1} for the appropriate $C^*$,
		\begin{equation*}
			\eps := C_2\lt(1 - \frac{C_1Y^{2(\frac{\ak}{k}-1)}(0)}{C_2\lambda M_0^{\frac{2\ak}{k}-1}}\rt) > 0.
		\end{equation*}
		Then for any $t\geq0$, $\frac{\d Y}{\d t}\leq 0$ and
		\begin{equation*}
			Y^{2(\frac{\ak}{k}-1)}(t) \leq Y^{2(\frac{\ak}{k}-1)}(0) = \frac{C_2-\eps}{C_1}\lambda M_0^{\frac{2\ak}{k}-1},
		\end{equation*}
		and
		\begin{align*}
			\frac{\d Y}{\d t} &\leq -\eps \lambda M_0^\frac{2\ak}{k} Y^{2\lt(1-\frac{\ak}{k}\rt)}.
		\end{align*}
		By Gronwall's inequality, we deduce
		\begin{equation*}
			Y(t) \leq \lt(Y(0)^{\frac{2\ak}{k}-1} - \eps \lambda \lt(\tfrac{2\ak}{k}-1\rt) M_0^\frac{2\ak}{k} t\rt)^\frac{k}{2\ak-k}.
		\end{equation*}
		Since $Y$ is positive and the above inequality goes to $0$ in finite time, we deduce that the solution ceases to be well defined in $L^1$ in a finite time $T^*$ verifying
		\begin{equation*}
			T^* < \frac{kY(0)^{\frac{2\ak}{k}-1}}{\eps \lambda (2\ak-k) M_0^\frac{2\ak}{k}}
			= \frac{k}{2\ak-k}\frac{Y(0)^{\frac{2\ak}{k}-1}}{C_2\lambda M_0^\frac{2\ak}{k} - C_1 Y^{2(\frac{\ak}{k}-1)}(0)M_0},
		\end{equation*}
		which proves the result.
		
		\noindent$\diamond$ \textit{Step two: Case $0 < k < \al \leq 1 \leq \ak$.} We use the symmetry between $x$ and $y$ to rewrite 
		\begin{align*}
		\mathcal{I}_2 &\geq C \lt( \iint_{ \substack{|x|\leq r\\ |y|\leq r} } + \iint_{  \substack{|x|> r\\ |x-y|\leq r/2} } + \iint_{  \substack{|x|> r\\ |x-y|> r/2 \\ |y|<r} } + \iint_{  \substack{|x|> r\\ |x-y|> r/2 \\ |y|>r} }  \rt) 
		\\
		&\quad \quad \quad \frac{\lt(\nabla m(x)-\nabla m(y)\rt)\cdot (x-y)}{|x-y|^\ak}\rho(\d x)\rho(\d y)\\
		&=\mathcal{I}_2^1 + \mathcal{I}_2^2 + \,\mathcal{I}_2^3 + \,\mathcal{I}_2^4.
		\end{align*}
		
		$\bullet$ Estimate of $\mathcal{I}_2^1$. For $|x|\leq r$, we have $m(x)=|x|^\ak$. Hence by strict convexity (since $\ak\geq 1$), we expand the inner product similarly as in the beginning of step one to obtain, with the same arguments 
		\begin{align*}
			\mathcal{I}_2^1=&\iint_{\substack{|x|\leq r\\|y|\leq r}} \frac{g(x,y) - h(x,y) \lt(x\cdot y\rt)}{|x-y|^\ak}\rho(\d x)\rho(\d y)\\
			 &\geq \iint_{\substack{|x|\leq r\\|y|\leq r}} \frac{g(x,y)-h(x,y)x\cdot y}{2^{\ak}(|x|^\ak+|y|^\ak)}\rho(\d x)\rho(\d y)
			\\
			&= \iint_{\substack{|x|\leq r\\|y|\leq r}} \frac{g(x,y)}{2^{\ak}(|x|^\ak+|y|^\ak)}\rho(\d x)\rho(\d y)
			\\\nonumber
			&\geq \frac{\ak}{2^\ak}\lt(\int_{B_r} \rho\rt)^2.
		\end{align*}
		
		$\bullet$ Estimate of $\mathcal{I}_2^2$. We may choose the linking function $\varphi$ in the definition of $m$ smooth enough so that  for $|x|>r$ it holds $|\nabla m(x)| \leq C |x|^{k-1}$. And since $k-1\leq 0$ and  $1-\ak \leq 0$, we have
		\begin{align*}
			\mathcal{I}_2^2 &=\iint_{\substack{|x|>r\\|y|> r\\ |x-y|> r/2}} \frac{\lt(\nabla m(x)-\nabla m(y)\rt)\cdot (x-y)}{|x-y|^\ak}\rho(\d x)\rho(\d y)
			\\
			&\geq - C \iint_{\substack{|x|>r\\|y|> r\\ |x-y|> r/2}} \lt(|x|^{k-1}+|y|^{k-1}\rt) |x-y|^{1-\ak} \rho(\d x)\rho(\d y)
			\\
			&\geq - 2^\ak C r^{k-\ak} \lt(\int_{B_r^c} \rho\rt)^2\!,
		\end{align*}
		
		$\bullet$ Estimate of $\mathcal{I}_2^3$. Similar considerations yield
		\begin{align*}
			\mathcal{I}_2^3 &=\iint_{\substack{|x|>r\\|y|\leq r\\ |x-y|> r/2}} \frac{\lt(\nabla m(x)-\nabla m(y)\rt)\cdot (x-y)}{|x-y|^\ak}\rho(\d x)\rho(\d y)
			\\
			&\geq - 2^{\ak-1} C \lt(r^{k-\ak} + 1\rt) \lt(\int_{B_r^c} \rho\rt)\lt(\int_{B_r} \rho\rt).
		\end{align*}

		$\bullet$ Estimate of $\mathcal{I}_2^4$. When $|x-y|\leq r/2$ and $|x|>r$, remark that it holds
		\begin{align*}
			|x| \,\leq\, |x-y| + |y| &\,\leq\, \frac{r}{2} + |y| \,<\, \frac{|x|}{2} + |y|
			\\
			|y| \,\leq\, |x-y| + |x| &\,\leq\, \frac{r}{2} + |x| \,\leq\, \frac{3|x|}{2}
		\end{align*}
		which implies that $r\leq \lt|x\rt| \leq 2\lt|y\rt| \leq 3\lt|x\rt|$. Therefore, we can write
		\begin{align*}
			\mathcal{I}_2^4 &=\iint_{\substack{|x|>r\\ |x-y|\leq r/2}} \frac{\lt(\nabla m(x)-\nabla m(y)\rt)\cdot (x-y)}{|x-y|^\ak}\rho(\d x)\rho(\d y)
			\\
			&\geq -C \iint_{\substack{|x|>r\\ |x-y|\leq r/2\\ |y|> r/2}} \lt||x|^{k-2}x-|y|^{k-2}y\rt| \lt|x-y\rt|^{1-\ak} \rho(\d x)\rho(\d y).
		\end{align*}
		Then, since $\lt|\nabla |z|^{k-2}\rt| = \lt|k-2\rt| |z|^{k-3}$ and $2\lt|x-y\rt|\leq |x|$, we obtain
		\begin{align*}
			\lt||x|^{k-2}x-|y|^{k-2}y\rt| &\leq \lt|\lt(|x|^{k-2}-|y|^{k-2}\rt)x\rt| + \lt||y|^{k-2}(x-y)\rt|
			\\
			&\leq C_k \lt|x\rt| \lt|x-y\rt| \sup_{|z|\geq |x|/2} |z|^{k-3} + |y|^{k-2} |x-y|
			\\
			&\leq C_{\ak,k}\, r^{k-\ak} \lt|x-y\rt|^{\ak-1},
		\end{align*}
		from which we get
		\begin{align*}
			\mathcal{I}_2^2 &\geq -C_{\ak,k} r^{k-\ak} \lt(\int_{B_{r/2}^c} \rho\rt)^2.
		\end{align*}
		Defining $Y = \intd \rho m$ and using the fact that
		\begin{align*}
			\tilde{Y} := \int_{B_r^c} \rho \,\leq\, C_{m,r}\, Y \text{  and  } \int_{B_r} \rho \,=\, M_0 - \tilde{Y},
		\end{align*}
		and gathering the previous estimates yields the existence of positive constants $C_2$, $C_3$, depending on $\ak$, $k$ and $r$ such that
		\begin{align*}
		\mathcal{I}_2&\geq \frac{\ak}{2^\ak}\lt( M_0 - \tilde{Y}\rt)^2-C_{\ak,k} r^{k-\ak} \tilde{Y}^2 -C 2^\ak r^{k-\ak} \tilde{Y}^2-C 2^{\ak-1} \lt(r^{k-\ak} + 1\rt) \tilde{Y}\lt( M_0 - \tilde{Y}\rt)
		\\
		&\geq \frac{C_2}{2}\lt( M_0^2-C_3 \tilde{Y}^2 \rt).
		\end{align*}
		Coming back to~\eqref{eq:diffmom} and using the fact that $\tilde{Y} \leq C_{m,r} Y$ yields the existence of a constant $C_4$ such that
		\begin{align*}
			\frac{\d Y}{\d t} &\leq C_1 M_0 + \frac{C_2}{2} \lambda \lt(C_4 Y^2 - M_0^2\rt).
		\end{align*}
		In particular, as long as $Y^2 \leq (2\,C_4)^{-1}M_0^2$ and $C_2\lambda M_0 \geq 8\, C_1$ it holds
		\begin{align}\label{eq:ineq_dY_dt}
			\frac{\d Y}{\d t} \leq C_1 M_0 - \frac{C_2}{4} \lambda M_0^2 \leq - C_1 M_0 \leq 0.
		\end{align}
		In particular, if $Y(0)^2 \leq (2\,C_4)^{-1}M_0^2$ then $Y$ remains decreasing for all times and for all $t>0$, $Y(0)^2 \leq (2C_3)^{-1}M_0^2$. By using again \eqref{eq:ineq_dY_dt}, this implies
		\begin{align*}
			Y(t) &\leq Y(0) - C_1M_0\, t,
		\end{align*}
		which becomes negative in finite time and leads again to a contradiction. The fact that the condition~\eqref{eq:condBU_2} is sufficient comes from the fact that there exists a constant $C>0$ such that 
		\begin{equation*}
			Y = \intd \rho m \leq C \intd \rho(x) |x|^k\dd x,
		\end{equation*}
		since $k<\ak$.
	\end{proof}

\section*{Acknowledgements}\label{sec:acknowledgements}

The second author was supported by the Fondation des Sciences Mathématiques de Paris and Paris Sciences \& Lettres Université.


\renewcommand{\bibname}{\centerline{Bibliography}}
\bibliographystyle{abbrv} 
\bibliography{KellerSegel}

\end{document}